\newtheorem{theorem}{Theorem}[section]
\newtheorem{defin}[theorem]{Definition}
\newtheorem{lemma}[theorem]{Lemma}
\newtheorem{rem}[theorem]{Remark}
\let \div \relax
\DeclareMathOperator{\div}{div}
\DeclareMathOperator{\dist}{dist}
\DeclareMathOperator{\R}{\mathbb{R}}
\providecommand{\Test}{\mathcal{D}}
\newcommand{\dd}{\, \mathrm{d}}
\newcommand{\del}{\partial}
\newcommand{\e}{\varepsilon}
\newcommand{\eps}{\varepsilon}
\newcommand{\weak}{\rightharpoonup}
\newcommand{\B}{\mathcal{B}}
\newcommand{\ms}{\mathbb{S}}
\newcommand{\vc}{\mathbf}
\newcommand{\vu}{\vc u}
\newcommand{\tvu}{\tilde{\vu}}
\newcommand{\tvr}{\tilde{\rho}}
\newcommand{\cre}{\color{black}}
\renewcommand{\rho}{\varrho}
\let\temp\phi
\let\phi\varphi
\let\varphi\temp
\title[Homogenization of NSE for lower adiabatic exponent]{Homogenization of the unsteady compressible Navier-Stokes equations for adiabatic exponent $\gamma>3$}
\date{\today}
\author{Florian Oschmann}
\address{Institute of Mathematics, Czech Academy of Sciences,
\v Zitn\'a 25, 115 67 Praha 1, Czech Republic.}
\email{oschmann@math.cas.cz}
\author{Milan Pokorn\'y}
\address{Charles University, Faculty of Mathematics and Physics, Mathematical Institute of  Char\-les University, Sokolovsk\'a 83, 186 75 Praha 8, Czech Republic.}
\email{pokorny@karlin.mff.cuni.cz}
\begin{document}
\maketitle

\begin{abstract}
We consider the unsteady compressible Navier-Stokes equations in a perforated three-di\-men\-sional domain, and show that the limit system for the diameter of the holes going to zero is the same as in the perforated domain provided the perforations are small enough. The novelty of this result is the lower adiabatic exponent $\gamma>3$ instead of the known value $\gamma>6$. The proof is based on the use of two different restriction operators {\cre leading to} two different types of pressure estimates. We also discuss the extension of this result for the unsteady Navier-Stokes-Fourier system as well as the optimality of the known results in arbitrary space dimension for both steady and unsteady problems.
\end{abstract}

\section{Introduction}

The homogenization in mathematical fluid mechanics and thermodynamics is usually connected with the sequence of problems studied in perforated domains containing many holes with a small radius approaching zero. Based on the relation of the size of the holes and their number, the question is which problem is satisfied in the limit on the domain without holes.\\

The first studies of this type were indeed performed for the steady \emph{incompressible} Stokes and Navier-Stokes equations, where the problem in a fixed bounded domain is well understood. Based on the original work of L.~Tartar \cite{Tartar1980}, G.~Allaire achieved the full picture for this problem in \cite{Allaire1989}, \cite{Allaire1990a}, and \cite{Allaire1990b}. Assuming the number of holes is of order $\e^{-3}$ {\cre for some $\e > 0$} and their diameter of order $\e^{\alpha}$, if $1\leq \alpha <3$ which corresponds to the case of large holes, the limit problem {\cre as $\e \to 0$} is the classical Darcy's law; for $\alpha >3$ corresponding to tiny holes, the limit problem remains the same as the original one (Stokes or Navier-Stokes equations depending on the sequence of problems). Finally, for the critical case $\alpha=3$, the limit problem is the Brinkman equations, that is, the original problem with an extra damping term. The same problems in the evolutionary case were studied by A.~Mikeli\'c \cite{Mikelic1991}, E.~Feireisl, Y.~Namlyeyeva, and \v{S}.~Ne\v{c}asov\'a \cite{FeireislNamlyeyevaNecasova2016}, and the picture was completed by Y.~Lu and P.~Yang in \cite{LuYang2023}.\\

The case of \emph{compressible} fluid flow is more complex and the complete picture has not been achieved yet. Assuming an adiabatic pressure law of the form $p(\rho) \sim \rho^\gamma$, the case $\alpha=1$ for the compressible Navier-Stokes equations was studied by N.~Masmoudi in \cite{Masmoudi2002}. The same situation for heat conducting fluids was investigated by E.~Feireisl, A.~Novotn\'y, and T.~Takahashi \cite{FeireislNovotnyTakahashi2010}. The problems for small holes, i.e., the limit problem is the same as the original one, was for the steady compressible Navier-Stokes system and an adiabatic exponent $\gamma>2$ given by L.~Diening, E.~Feireisl, and Y.~Lu in \cite{DieningFeireislLu2017}, and the evolutionary case was considered for $\gamma >6$ by Y.~Lu and S.~Schwarzacher in \cite{LuSchwarzacher2018}. Similar results for the heat conducting case were obtained by Y.~Lu and M.~Pokorn\'y in the steady case (see \cite{LuPokorny2021}), and by M.~Pokorn\'y and E.~Sk\v{r}\'\i \v{s}ovsk\'y for the evolutionary system (see \cite{PokornySkrisovski2021}). All the aforementioned results concern the case of three spatial dimensions. The situation in two space dimensions is more complex and has been recently resolved by \v{S}.~Ne\v{c}asov\'a and J.~Pan in the steady case (see \cite{NecasovaPan2022}), and by \v{S}.~Ne\v{c}asov\'a and F.~Oschmann in the evolutionary case (see \cite{NecasovaOschmann2023}), for $\gamma >1$ and $\gamma >2$, respectively. The last result is based on the ideas of M.~Bravin for the case of flow around an obstacle (see \cite{Bravin2022}). Randomly distributed holes were considered by A.~Giunti and R.~H\"ofer in \cite{GiuntiHoefer2019}, as well as in the thesis by F.~Oschmann \cite{OschmannDiss2022}, see also \cite{Oschmann2022} and \cite{BellaOschmann2023}.\\

In this paper, we want to focus on the value of the adiabatic exponent $\gamma$ in the pressure law $p(\rho) \sim \rho^\gamma$. The \emph{physical} values are in the range $1 \leq \gamma \leq 1 + \frac{2}{d}$, where the space dimension $d \in \{2,3\}$ (see, e.g., \cite{Clausius1857}). However, as mentioned above, the \emph{mathematically} known values are $\gamma > d-1$ in the steady case, and $\gamma>4d - 6$ for the evolutionary systems. Our aim is to lower the last bound to $\gamma>d$ by giving two different proofs. Moreover, we discuss why this bound seems to be optimal in terms of the dimension $d$.\\

\paragraph{\textbf{Notation.}} We use the standard notations for Lebesgue and Sobolev spaces, and denote them even for vector- or matrix-valued functions as in the scalar case, e.g., we use $L^p(D)$ instead of $L^p(D;\R^3)$. The Frobenius inner product of two matrices $A,B\in \R^{3\times 3}$ is denoted by $A:B=\sum_{i,j=1}^3 A_{ij} B_{ij}$. Moreover, we use the notation $a\lesssim b$ whenever there is a generic constant $C>0$ which is independent of $a$, $b$, and $\e$ such that $a\leq C b$. Lastly, we denote for a function $f$ with domain of definition $G \subset\R^3$ its zero prolongation by $\tilde{f}$, that is,
\begin{align*}
\tilde{f}=f \text{ in } G,\quad \tilde{f}=0 \text{ in } \R^3\setminus G.
\end{align*}

\paragraph{\textbf{Organization of the paper.}} The paper is organized as follows. In Section~\ref{sec2}, we introduce the compressible Navier-Stokes equations and the underlying domain, and formulate our main result. In Section~\ref{sec:Bds} we give uniform bounds on the functions as well as a refined pressure decomposition crucial in our analysis. Sections~\ref{sec:Conv} and \ref{sec:NSF} are devoted to show the convergence result for the Navier-Stokes and Navier-Stokes-Fourier equations, respectively. A different approach and its main ideas for the proof of the main result are given in Section~\ref{sec:Bravin}. Finally, in Section~\ref{sec:Concl} we discuss the optimality of the adiabatic exponent in terms of the space-time dimension of the problem.

\section{The model, weak solutions, and the main result}\label{sec2}
In this section, we introduce the perforated domain, the evolutionary compressible Navier-Stokes equations, and state our main result. We start with the description of the perforated domain and the equations governing the fluid's motion.
\subsection{The perforated domain and the Navier-Stokes equations}
For $\e\in (0,1)$, let $D\subset \R^3$ be a bounded domain with smooth boundary, and let $K_\e \subset D$ be a compact set. Define now
\begin{align}
D_\e= D\setminus K_\e. \label{defDeps}
\end{align}
Moreover, we assume that there exists a family of balls $B_{\e^\alpha}(x_i(\e))$, $i=1,...,N(\e)$, $\alpha\geq 1$, such that
\begin{align}
\begin{split}
K_\e \subset \bigcup_{i=1}^{N(\e)} B_{\e^\alpha}(x_i(\e)),\\
\dist(x_i(\e), \del D) > \e,\\
\forall i\neq j: |x_i(\e)-x_j(\e)| > 2 \e.
\end{split} \label{defKeps}
\end{align}
Note that this implies
\begin{align*}
|K_\e| \lesssim N(\e) \e^{3\alpha} \lesssim \e^{3(\alpha-1)}.
\end{align*}
For fixed $T>0$, we consider in $(0,T)\times D_\e$ the evolutionary compressible Navier-Stokes equations
\begin{align}\label{NSE}
\begin{cases}
\del_t \rho_\e + \div(\rho_\e \vu_\e)=0 & \text{in } (0,T)\times D_\e,\\
\del_t(\rho_\e \vu_\e) + \div(\rho_\e \vu_\e \otimes \vu_\e) + \nabla p(\rho_\e) = \div \ms(\nabla \vu_\e) + \rho_\e \vc f & \text{in } (0,T)\times D_\e,\\
\vu_\e=0 & \text{on } (0,T)\times \del D_\e,\\
\rho_\e(0,\cdot)=\rho_{\e, 0},\ (\rho_\e\vu_\e)(0,\cdot)=\vc m_{\e, 0} & \text{in } D_\e.
\end{cases}
\end{align}
Here, $\rho_\e$ and $\vu_\e$ denote the fluid's density and velocity, respectively, $p(s)=s^\gamma$ for some $\gamma>\frac32$, $\ms(\nabla \vu)$ is the Newtonian viscous stress tensor of the form
\begin{align*}
\ms(\nabla\vu)=\mu \Big(\nabla\vu + \nabla\vu^T-\frac23 \div(\vu)\mathbb{I} \Big)+\eta\div(\vu)\mathbb{I}, \quad \mu>0,\, \eta\geq 0,
\end{align*}
and $\vc f\in L^\infty((0,T)\times D)$ is given. The exact range of the adiabatic exponent $\gamma$ we can handle will be specified in Theorem~\ref{thm1} below.

\begin{rem}
As a matter of fact, we are able to consider pressure laws $p(s)$ satisfying
\begin{align*}
p\in C([0,\infty)) \cap C^1((0,\infty)),\quad p(0) = 0, \quad p'(s)>0 \ (s > 0), \quad \lim_{s\to \infty} \frac{p'(s)}{s^{\gamma-1}} = a > 0,
\end{align*}
however, we don't want to unnecessarily complicate the analysis.
\end{rem}

\subsection{Weak solutions and main result}
For further use, we introduce the concept of finite energy weak solutions.
\begin{defin}\label{def1}
Let $T>0$ be fixed, $\gamma>\frac32$, and let the initial data satisfy
\begin{align*}
\rho(0,\cdot)=\rho_0,\quad (\rho\vu)(0,\cdot)=\vc m_0,
\end{align*}
together with the compatibility conditions
\begin{equation}\label{init}
\begin{gathered}
\rho_0\geq 0 \text{ a.e.~in } D_\e, \quad \rho_0\in L^\gamma(D_\e),\\
\vc m_0=0 \text{ on } \{\rho_0=0\}, \quad \vc m_0 \in L^\frac{2\gamma}{\gamma+1}(D_\e),\quad \frac{|\vc m_0|^2}{\rho_0} \in L^1(D_\e).
\end{gathered}
\end{equation}
 We call a duplet $(\rho,\vu)$ a \emph{renormalized finite energy weak solution} to system \eqref{NSE} if:
\begin{itemize}
\item The solution belongs to the regularity class
\begin{gather*}
\rho\geq 0 \text{ a.e.~in } (0,T)\times D_\e, \quad \rho\in L^\infty(0,T;L^\gamma(D_\e)), \quad \int_{D_\e} \rho \dd x = \int_{D_\e} \rho_0 \dd x,\\
\vu\in L^2(0,T;W_0^{1,2}(D_\e)), \quad \rho\vu\in L^\infty(0,T;L^\frac{2\gamma}{\gamma+1}(D_\e));
\end{gather*}
\item We have
\begin{equation}\label{renCE}
\begin{split}
\del_t \rho + \div(\rho \vu) &= 0 \text{ in } \mathcal{D}'((0,T)\times D_\e),\\
\del_t \tilde{\rho} + \div(\tilde{\rho} \tilde{\vu}) &= 0 \text{ in } \mathcal{D}'((0,T)\times \R^3), \\
\del_t b(\tilde{\rho})+\div(b(\tilde{\rho})\tilde{\vu})+(\tilde{\rho}b'(\tilde{\rho})-b(\tilde{\rho}))\div\tilde{\vu} &= 0 \text{ in } \mathcal{D}'((0,T)\times \R^3)
\end{split}
\end{equation}
for any $b\in C([0,\infty)) \cap C^1((0,\infty))$ satisfying
\begin{align*}
b'(s) \leq c s^{-\lambda_0} \ \text{for} \ s \in (0,1],\quad b'(s) \leq c s^{\lambda_1} \ \text{for} \ s \in [1,\infty)
\end{align*}
with
\begin{align*}
c>0, \quad \lambda_0<1, \quad -1 < \lambda_1 \leq \frac12 \bigg(\frac53 \gamma - 1 \bigg) - 1;
\end{align*}
\item For any $\phi\in C_c^\infty([0,T)\times D_\e; \R^3)$,
\begin{align}\label{wkMom}
\begin{split}
&\int_0^T\int_{D_\e} \rho \vu \cdot \del_t \phi \dd x \dd t + \int_0^T\int_{D_\e} \rho \vu\otimes \vu : \nabla \phi \dd x \dd t + \int_0^T\int_{D_\e} \rho^\gamma\div \phi \dd x \dd t \\
&- \int_0^T\int_{D_\e} \ms(\nabla \vu):\nabla \phi \dd x \dd t + \int_0^T\int_{D_\e} \rho \vc f \cdot \phi \dd x \dd t = -\int_{D_\e} \vc m_0 \cdot \phi(0,\cdot) \dd x;
\end{split}
\end{align}

\item For almost any $\tau \in [0,T]$, the energy inequality holds:
\begin{align}\label{EI}
\begin{split}
&\int_{D_\e} \frac12 \rho |\vu|^2(\tau,\cdot) + \frac{\rho^\gamma(\tau,\cdot)}{\gamma-1} \dd x + \int_0^\tau \int_{D_\e} \ms(\nabla\vu):\nabla\vu \dd x \dd t \\
&\quad \leq \int_{D_\e} \frac{|\vc m_0|^2}{2\rho_0} + \frac{\rho_0^\gamma}{\gamma-1} \dd x + \int_0^\tau \int_{D_\e} \rho \vc f\cdot\vu \dd x \dd t.
\end{split}
\end{align}
\end{itemize}
\end{defin}

Regarding existence of weak solutions, we have the following
\begin{theorem}[{\cite[Theorem~1.1]{FeireislNovotnyPetzeltova2001}}]
Let $D_\e\subset \R^3$ be a bounded domain with smooth boundary, $\gamma>\frac32$, $T>0$ be given. Let the initial data satisfy \eqref{init}. Then, there exists a renormalized finite energy weak solution $(\rho,\vu)$ to system \eqref{NSE} in the sense of Definition~\ref{def1}.
\end{theorem}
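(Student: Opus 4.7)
The plan is to follow the by-now classical Lions--Feireisl multi-level approximation scheme. First, I would construct approximate solutions $(\rho_{n,\eps,\delta}, \vu_{n,\eps,\delta})$ on a fixed domain $D_\e$ by: (i) replacing the continuity equation by a parabolic regularization $\del_t \rho + \div(\rho\vu) = \eps \Delta \rho$ with homogeneous Neumann data (so $\rho$ gains strict positivity and $W^{1,2}$-regularity in space); (ii) augmenting the pressure by an artificial term $p_\delta(\rho) = \rho^\gamma + \delta \rho^\beta$ for some large $\beta \geq \max\{4, \gamma\}$ which guarantees $\rho \in L^2$ and hence admissibility of the Lions renormalization; (iii) solving the resulting momentum equation by Faedo--Galerkin approximation in a finite-dimensional subspace $X_n \subset W_0^{1,2}(D_\e)$ spanned by smooth eigenfunctions, using a Schauder fixed-point argument (existence of $\rho$ given $\vu\in X_n$ from the parabolic problem, existence of $\vu$ from an ODE on $X_n$). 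Standard energy balance for the Galerkin approximation, together with regularization of the initial data, yields uniform bounds.

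Next, I would pass to the limit $n\to\infty$ at fixed $\eps,\delta > 0$. The parabolic regularization keeps $\rho_{n,\eps,\delta}$ smooth and bounded in $L^\infty(0,T;L^\beta)\cap L^{\beta+1}((0,T)\times D_\e)$, so the pressure and convective terms converge strongly enough to recover a weak solution at level $(\eps,\delta)$, still satisfying an energy inequality. Then I would let $\eps \to 0^+$: the main issue becomes the strong convergence of $\rho_{\eps,\delta}$ in $L^1$. This is resolved by Lions' effective viscous flux identity
\begin{align*}
\overline{(\rho^\gamma + \delta \rho^\beta) \rho} - \overline{(\rho^\gamma + \delta \rho^\beta)}\, \rho = \tfrac{1}{\mu'} \bigl(\overline{\rho\, \ms(\nabla \vu)\!:\!\mathbb{I} \cdot (\cdot)^{-1}} - \rho\, \overline{\ms(\nabla \vu)\!:\!\mathbb{I}}\bigr),
\end{align*}
derived by testing the momentum equation with $\nabla \Delta^{-1}(\varphi \rho)$ and commuting with Riesz transforms (Div--Curl / commutator lemma). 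Since $\beta$ is large, both $\rho$ and its weak limit lie in $L^2$, so the renormalized continuity equation is available via DiPerna--Lions, and one then obtains strong convergence $\rho_{\eps,\delta}\to \rho_\delta$.

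The most delicate step, and the genuine contribution of \cite{FeireislNovotnyPetzeltova2001}, is the final vanishing artificial pressure limit $\delta \to 0^+$ when $\gamma \in (3/2, 9/5)$, where the natural bound gives only $\rho_\delta^\gamma \in L^q$ for some $q$ close to $1$ and one loses the $L^2$-integrability required by DiPerna--Lions. The remedy is to introduce the oscillation defect measure
\begin{align*}
\mathbf{osc}_{\gamma+1}[\rho_\delta \to \rho]((0,T)\times D_\e) := \sup_{k\geq 1} \Bigl(\limsup_{\delta \to 0^+} \int_0^T\!\!\int_{D_\e} |T_k(\rho_\delta) - T_k(\rho)|^{\gamma+1} \dd x \dd t\Bigr),
\end{align*}
with $T_k$ a smooth truncation, and to show via the effective viscous flux identity together with the monotonicity of the pressure that this quantity is finite. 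This bound implies that the renormalized continuity equation holds for the limit even without $\rho\in L^2$, which in turn propagates the defect $\overline{\rho\log\rho} - \rho\log\rho$ and yields its decay in time; uniqueness of solutions to the pure transport equation with initial datum zero then forces strong convergence $\rho_\delta \to \rho$. The remaining limit passages in the momentum equation and the energy inequality are straightforward, and the compatibility conditions on the initial data are preserved throughout by the standard mollification procedure.
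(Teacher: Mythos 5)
This theorem is not proved in the paper; it is cited verbatim from \cite{FeireislNovotnyPetzeltova2001}, so there is no ``paper's own proof'' to compare against. Your proposal faithfully reproduces the strategy of that reference and of the related monographs: the three-level approximation scheme (Faedo--Galerkin in $n$, parabolic regularization $\eps\Delta\rho$ of the continuity equation, artificial pressure $\delta\rho^\beta$ with $\beta$ large), the effective viscous flux identity obtained by testing the momentum equation with $\nabla\Delta^{-1}(\varphi\rho)$ and using the Div--Curl / commutator lemma, and---most importantly for the range $\gamma\in(\frac32,\frac95)$ where one loses $\rho\in L^2$---the boundedness of the oscillation defect measure $\mathbf{osc}_{\gamma+1}[\rho_\delta\to\rho]$, which allows one to close the renormalized continuity equation for the limit density and then propagate the defect $\overline{\rho\log\rho}-\rho\log\rho$. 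That last ingredient is indeed the decisive new contribution of \cite{FeireislNovotnyPetzeltova2001}, so you have correctly identified the crux.

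One caveat: the effective viscous flux identity as you display it is malformed. For the viscous stress $\ms$ of this paper the correct statement reads
\begin{align*}
\overline{(\rho^\gamma+\delta\rho^\beta)\,\rho}-\overline{\rho^\gamma+\delta\rho^\beta}\;\rho
=\Big(\tfrac{4}{3}\mu+\eta\Big)\Big(\overline{\rho\,\div\vu}-\rho\,\overline{\div\vu}\Big)
\qquad\text{a.e.~in }(0,T)\times D_\e,
\end{align*}
and your right-hand side, involving $\ms(\nabla\vu):\mathbb{I}$ and a prefactor $\frac{1}{\mu'}$, does not reduce to this: there is no $\mu'$ in the statement's setting, and $\ms(\nabla\vu):\mathbb{I}=3\eta\,\div\vu$ misses the $\frac43\mu$ contribution, which comes from the off-diagonal part of the operator $\nabla\nabla\Delta^{-1}$ and not from the trace of $\ms$. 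Aside from that notational slip, the outline is accurate and complete.
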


We are now in the position to state our main result in this paper.
\begin{theorem}\label{thm1}
Let $D\subset\R^3$ be a bounded domain with smooth boundary, $K_\e \subset D$ comply with \eqref{defKeps}, and $D_\e$ be defined as in \eqref{defDeps}. Let $(\rho_\e,\vu_\e)$ be a sequence of renormalized finite energy weak solutions to system \eqref{NSE} emanating from the initial data $(\rho_{\e, 0}, \vc m_{\e, 0})$, and assume
\begin{align}\label{initConv}
\tilde{\rho}_{\e, 0}\to \rho_0 \text{ weakly in } L^\gamma(D), \quad  \frac{|\tilde{\vc m}_{\e, 0}|^2}{\tilde{\rho}_{\e, 0}} \to \frac{|\vc m_0|^2}{\rho_0} \text{ weakly in } L^1(D).
\end{align}
Then, there exists a subsequence (not relabelled) such that
\begin{align*}
\tilde{\rho}_\e &\weak^* \rho \text{ weakly-$\ast$ in } L^\infty(0,T;L^\gamma(D)) \text{ and weakly in } L^{\frac53 \gamma - 1}((0,T) \times D),\\
\tilde{\vu}_\e &\weak \vu \text{ weakly in } L^2(0,T;W_0^{1,2}(D)),
\end{align*}
where $(\rho,\vu)$ is a solution to system \eqref{NSE} in the domain $(0,T)\times D$ with initial conditions $\rho(0,\cdot)=\rho_0$ and $(\rho\vu)(0,\cdot)=\vc m_0$, provided
\begin{align}\label{cond1}
\gamma>3 \ \text{and} \ \alpha>\max\left\{ 3, \frac{2\gamma-3}{\gamma-3} \right \}. 
\end{align}
\end{theorem}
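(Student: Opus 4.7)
\medskip

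The overall strategy is the now-classical three-step scheme for homogenization of compressible Navier-Stokes: (i) extract uniform-in-$\e$ bounds from the energy inequality, sharpened by an improved pressure estimate; (ii) construct a restriction operator $R_\e:W^{1,2}_0(D)\to W^{1,2}_0(D_\e)$ that is close to the identity in a quantitative sense, and use it to test the momentum equation with an arbitrary $\phi\in C_c^\infty([0,T)\times D)$ in order to pass to the limit; (iii) identify the weak limit of the pressure with $\rho^\gamma$ via the Lions--Feireisl effective viscous flux identity and the renormalized continuity equation. The lowered bound $\gamma>3$ is built into step (i)–(ii): the novelty announced in the abstract is to use \emph{two} restriction operators, say $R_\e^{(1)}$ and $R_\e^{(2)}$, and to test with $\phi=\mathcal{B}_{D_\e}(\rho_\e^\theta-\langle\rho_\e^\theta\rangle_{D_\e})$ through each of them, yielding complementary pressure estimates that, taken together, suffice to close the argument already for $\gamma>3$.

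\medskip

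In detail, I would first collect from the energy inequality \eqref{EI} and the convergence \eqref{initConv} the standard uniform bounds $\tilde\rho_\e\in L^\infty(0,T;L^\gamma(D))$, $\tilde\vu_\e\in L^2(0,T;W^{1,2}_0(D))$, $\widetilde{\rho_\e\vu_\e}\in L^\infty(0,T;L^{2\gamma/(\gamma+1)}(D))$, and $\widetilde{\rho_\e|\vu_\e|^2}\in L^\infty(0,T;L^1(D))$; extraction of subsequences then produces the weak limits $\rho,\vu$ announced in the theorem. The next, decisive step is to establish an improved pressure bound of the form $\tilde\rho_\e^{\gamma+\theta}\in L^1((0,T)\times D)$ for some $\theta>0$. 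This is done by the Bogovskii-type test function $\phi_\e=R_\e^{(i)}\mathcal{B}_{D}(\tilde\rho_\e^\theta-\langle\tilde\rho_\e^\theta\rangle_D)$ in the weak momentum equation \eqref{wkMom}; the difference $\phi_\e-\mathcal{B}_D(\ldots)$ is the homogenization error, controlled by the operator norms of $R_\e^{(i)}-\mathrm{Id}$ on $W^{1,p}_0$. Using one operator $R_\e^{(1)}$ adapted to large $p$ gives an estimate good for the convective term, while a second operator $R_\e^{(2)}$ adapted to a different Lebesgue exponent yields a second, complementary pressure bound; interpolating the two under the hypothesis $\alpha>\max\{3,(2\gamma-3)/(\gamma-3)\}$ will give enough integrability of $\rho_\e^\gamma$ to control every error term. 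I expect this balancing of exponents—choosing $\theta$, the two operator norms, and the range of $\gamma$ compatibly—to be the most delicate computation of the paper.

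\medskip

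With the improved pressure bound in hand, passage to the limit in the continuity equation is routine by Aubin--Lions once one notes that $\tilde\rho_\e,\widetilde{\rho_\e\vu_\e}$ have equi-continuous time translates (using $\vu_\e|_{\partial D_\e}=0$ to extend by zero consistently). For the momentum equation, I would test with $R_\e^{(1)}\phi$ for $\phi\in C_c^\infty([0,T)\times D;\R^3)$: the pieces involving $\partial_t$, viscous stress, force, and convective term all pass to the limit provided $R_\e^{(1)}\phi\to\phi$ strongly in $L^2(0,T;W^{1,2}_0(D))$ with remainders controlled by the improved pressure bound (this is precisely where $\alpha>3$ enters, ensuring that the capacity of the holes is negligible). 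For the convective term one additionally needs strong convergence of $\widetilde{\rho_\e\vu_\e}$ in, say, $L^2(0,T;W^{-1,2}(D))$, which again follows from the continuity equation and compact embeddings.

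\medskip

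The final step is to identify the weak limit of $\tilde\rho_\e^\gamma$. Following Lions and Feireisl, I would derive the effective viscous flux identity $\overline{\rho^\gamma\div\vu}-\overline{\rho^\gamma}\,\div\vu=(\tfrac{4\mu}{3}+\eta)\bigl(\overline{\rho\div\vu}-\rho\,\div\vu\bigr)$ by applying the Riesz operator $\nabla\Delta^{-1}$ to the momentum equation tested against $R_\e^{(1)}\phi$, carefully tracking the commutators; the hypothesis $\gamma>3$ guarantees $\rho\in L^2$ in space so that $\rho\div\vu$ is well defined. Combined with the renormalized continuity equation applied to $T_k(\rho)\log T_k(\rho)$ (truncation, so that $\lambda_1\le \tfrac12(\tfrac{5}{3}\gamma-1)-1$ is harmless for $\gamma>3$), this yields strong convergence $\tilde\rho_\e\to\rho$ in $L^1((0,T)\times D)$ and therefore $\overline{\rho^\gamma}=\rho^\gamma$. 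Altogether $(\rho,\vu)$ is a renormalized finite energy weak solution on $(0,T)\times D$ with the prescribed initial data, completing the proof of Theorem~\ref{thm1}. The principal obstacle, and the reason the classical approach required $\gamma>6$, is precisely the interplay between the operator norm of $R_\e$ and the improved pressure integrability in the Bogovskii step; the two-operator device is what makes $\gamma>3$ accessible.
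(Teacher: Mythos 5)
Your high-level roadmap — energy bounds, improved pressure integrability via a Bogovski\u{\i}-type test function, a restriction/cut-off argument for the momentum equation, and the Lions–Feireisl effective viscous flux identity to recover $\overline{\rho^\gamma}=\rho^\gamma$ — matches the paper's scheme. However, the mechanism you propose for breaking the classical barrier $\gamma>6$ is not the one the paper uses, and as stated it contains a gap that would prevent the argument from closing for $\gamma\in(3,6]$.

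You interpret the ``two restriction operators'' as two different operators plugged into the \emph{same} Bogovski\u{\i}-type test in the perforated domain, producing two improved integrability estimates for $\rho_\e$ that you then interpolate. This is not how the paper proceeds, and it is unclear that it could work: the Bogovski\u{\i} step (Lemma~\ref{lem:RegDens}) only yields $\rho_\e^{\gamma+\theta}\in L^1$ with $\theta=\tfrac23\gamma-1$, i.e.\ $p(\rho_\e)\in L^{5/3-1/\gamma}((0,T)\times D_\e)$, exactly as in the fixed domain. As noted in Remark~\ref{rem1}, for $\gamma\le 6$ the spatial exponent $5/3-1/\gamma$ drops below $3/2$, which is insufficient to pair with the $L^3$-type blow-up of $\nabla g_\e$ near the holes; a different restriction operator in the same Bogovski\u{\i} test would not change the admissible $\theta$, which is fixed by the $L^\infty_t L^\gamma_x$ bound on $\rho_\e$, not by the operator. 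The actual key idea is of a different nature: the second restriction operator, $R_\e$ from Theorem~\ref{thm:Restr} with the property $R_\e\tilde{\vu}=\vu$ for $\vu\in W^{1,q}_0(D_\e)$ (a property the $\widehat{R}_\e$ behind $\mathcal{B}_\e$ does \emph{not} have), is used to define a \emph{pressure extension} $P_\e$ to the full domain $D$ à la Allaire/Masmoudi, and then the distributional gradient $\nabla P_\e$ is estimated term by term using the momentum equation, yielding the decomposition \eqref{pressDec}: one part of $P_\e$ is in $L^1(0,T;L^{\min\{2,3\gamma/(\gamma+3)\}}(D))$ and another in $\widehat{W}^{-1,2}(0,T;L^2(D))$. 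The point is that this trades time regularity for spatial integrability above $3/2$ (since $3\gamma/(\gamma+3)>3/2$ precisely for $\gamma>3$), which is what pairs correctly with $\nabla g_\e$ in the remainder $F_\e$ of \eqref{approx_mom}. Your proposal has no such extension-plus-decomposition, and ``interpolating two complementary pressure bounds'' is not a substitute: there is no interpolation between the $L^{5/3-1/\gamma}_{t,x}$ bound of Lemma~\ref{lem:RegDens} and the sum-space bound \eqref{pressDec} anywhere in the proof; the two estimates serve disjoint purposes (the first to show $P_\e-\tilde{\rho}_\e^\gamma\to0$ in the flux identity, the second to kill $F_\e$).

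Two further, minor inaccuracies: the test function $\phi$ in the limit passage is not $R_\e^{(1)}\phi$ but the split $g_\e\phi+(1-g_\e)\phi$ with scalar cut-offs $g_\e$ as in Lemma~\ref{lem:Cutoff} (or, in the alternative proof of Section~\ref{sec:Bravin}, the divergence-free matrix cut-off $\Phi_\e\phi$, which avoids the pressure decomposition entirely); and the role of $\gamma>3$ is not to make ``$\rho\in L^2_x$ so that $\rho\div\vu$ is well defined'' but to make $3\gamma/(\gamma+3)>3/2$ in \eqref{pressDec} and $3-\tfrac{3\gamma}{2\gamma-3}>0$ in \eqref{jedna}.
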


\begin{rem}
We remark that \eqref{cond1} with $\gamma \geq 6$ yields the $\gamma$-independent bound $\alpha>3$, which is sharper than the known bound $\alpha> \frac{3(2\gamma-3)}{\gamma-6} > 6$ in \cite{LuSchwarzacher2018}.
\end{rem}

\section{Uniform bounds}\label{sec:Bds}
In this section, we show uniform in $\eps$ bounds on the velocity, density, and momentum. Additionally, we give a refined pressure decomposition.
\subsection{Bounds obtained from the energy inequality}
\begin{lemma}
Under the assumptions of Theorem~\ref{thm1}, we have
\begin{align}\label{unifbds}
\|\rho_\e\|_{L^\infty(0,T;L^\gamma(D_\e))} + \|\sqrt{\rho_\e}\vu_\e\|_{L^\infty(0,T;L^2(D_\e))} + \|\vu_\e\|_{L^2(0,T;W_0^{1,2}(D_\e))} \leq C
\end{align}
for some constant $C>0$ independent of $\e$.
\end{lemma}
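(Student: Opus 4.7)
The plan is to read everything off the energy inequality \eqref{EI}, controlling the right-hand side by a combination of the hypothesis \eqref{initConv} on the initial data and a Grönwall argument for the external-force contribution. Throughout, the key point is that every constant has to be independent of $\eps$, which I address by extending velocities by zero from $D_\eps$ to $D$ and invoking Korn's and Poincaré's inequalities on the fixed domain $D$.

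First I would control the initial data. By \eqref{init} applied to $(\rho_{\eps,0}, \vc m_{\eps,0})$ together with the assumed weak convergence in \eqref{initConv}, both $\|\tilde{\rho}_{\eps,0}\|_{L^\gamma(D)}$ and $\||\tilde{\vc m}_{\eps,0}|^2/\tilde{\rho}_{\eps,0}\|_{L^1(D)}$ are uniformly bounded; hence the right-hand side of \eqref{EI} evaluated at time zero is bounded uniformly in $\eps$. Mass conservation (the equality $\int_{D_\eps}\rho_\eps\dd x=\int_{D_\eps}\rho_{\eps,0}\dd x$ built into Definition~\ref{def1}) then gives a time-uniform $L^1$ bound on $\rho_\eps$ that is independent of $\eps$.

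Next I would estimate the force term. Since $\vc f\in L^\infty((0,T)\times D)$, splitting $\rho_\eps\vc f\cdot\vu_\eps=\sqrt{\rho_\eps}\vc f\cdot\sqrt{\rho_\eps}\vu_\eps$ and applying Young's inequality yields
\begin{align*}
\int_0^\tau\!\!\int_{D_\eps}\rho_\eps \vc f\cdot \vu_\eps\dd x\dd t
\leq \tfrac12\|\vc f\|_{L^\infty}^2\int_0^\tau\!\!\int_{D_\eps}\rho_\eps\dd x\dd t
+\tfrac12\int_0^\tau\!\!\int_{D_\eps}\rho_\eps|\vu_\eps|^2\dd x\dd t.
\end{align*}
The first integral is controlled by the uniform $L^1$ bound on $\rho_\eps$; the second is absorbed into the Grönwall loop. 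Setting $E_\eps(\tau):=\int_{D_\eps}\bigl(\tfrac12\rho_\eps|\vu_\eps|^2+\tfrac{\rho_\eps^\gamma}{\gamma-1}\bigr)(\tau,\cdot)\dd x$, the energy inequality \eqref{EI} thus reads
\begin{align*}
E_\eps(\tau)+\int_0^\tau\!\!\int_{D_\eps}\ms(\nabla\vu_\eps):\nabla\vu_\eps\dd x\dd t
\leq C+ \int_0^\tau E_\eps(t)\dd t,
\end{align*}
with $C$ independent of $\eps$, and Grönwall's lemma supplies $\sup_{\tau\in[0,T]}E_\eps(\tau)\leq C$. This simultaneously gives the claimed $L^\infty(0,T;L^\gamma(D_\eps))$ bound on $\rho_\eps$ and the $L^\infty(0,T;L^2(D_\eps))$ bound on $\sqrt{\rho_\eps}\vu_\eps$.

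It remains to convert the dissipation on the left-hand side into the $W_0^{1,2}$ bound on $\vu_\eps$. This is where uniformity in $\eps$ must be checked carefully. The algebraic identity $\ms(\nabla\vu):\nabla\vu=\mu|\nabla\vu+\nabla\vu^T-\tfrac23\div\vu\,\mathbb{I}|^2/2+\eta(\div\vu)^2$ together with Korn's inequality controls $\|\nabla\vu_\eps\|_{L^2(D_\eps)}^2$ from below, and since $\vu_\eps\in W_0^{1,2}(D_\eps)$ I would extend $\tilde{\vu}_\eps$ by zero to the fixed domain $D$, so that $\tilde{\vu}_\eps\in W_0^{1,2}(D)$ and the Korn (as well as Poincaré) constant can be taken as that of the fixed outer domain $D$. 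This yields $\int_0^T\|\nabla\vu_\eps\|_{L^2(D_\eps)}^2\dd t\leq C$ uniformly in $\eps$, completing \eqref{unifbds}.

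The only real subtlety is the $\eps$-independence of the Korn/Poincaré constants on the perforated domain $D_\eps$, and the zero-extension trick sidesteps it entirely; everything else is a standard Grönwall run on \eqref{EI}.
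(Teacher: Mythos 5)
Your proposal is correct and follows essentially the same route as the paper: bound the initial energy via \eqref{initConv}, extract the uniform $L^1$ density bound from mass conservation, control the force term by Cauchy--Schwarz/Young, close the Grönwall loop, and recover the $W_0^{1,2}$ bound via zero-extension to $D$ together with Korn and Poincaré on the fixed outer domain. The only cosmetic difference is that you carry the full energy $E_\eps$ (including the $\rho_\eps^\gamma/(\gamma-1)$ part) on the right-hand side of the Grönwall inequality, whereas the paper keeps only the kinetic part there; both close in the same way.
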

\begin{proof}
By the energy inequality \eqref{EI} and the assumptions on the initial data \eqref{initConv}, we obtain
\begin{align*}
\int_{D_\e} \frac12 \rho_\e |\vu_\e|^2(\tau,\cdot) + \frac{\rho_\e^\gamma(\tau,\cdot)}{\gamma-1} \dd x + \int_0^\tau \int_{D_\e} \ms(\nabla\vu_\e):\nabla\vu_\e \dd x \dd t \leq C + \int_0^\tau\int_{D_\e} \rho_\e \vc f \cdot \vu_\e \dd x \dd t.
\end{align*}
Note further that the conservation of mass and the convergence of the initial data $\rho_{\e, 0}$ yields
\begin{align*}
\|\rho_\e\|_{L^\infty(0,T;L^1(D_\e))}=\|\rho_{\e, 0}\|_{L^1(D_\e)}\leq |D_\e|^{1-\frac{1}{\gamma}} \|\tilde{\rho}_{\e, 0}\|_{L^\gamma(D)}\leq C
\end{align*}
since $|D_\e|\leq |D|$. Using now H\"older's and Young's inequality, we get for almost any $\tau\in [0,T]$
\begin{align*}
\int_{D_\e} \rho_\e\vc f \cdot \vu_\e(\tau) \dd x \dd t \leq C \|\rho_\e(\tau)\|_{L^1(D_\e)}^\frac12 \|\rho_\e |\vu_\e|^2(\tau)\|_{L^1(D_\e)}^\frac12 \leq C + \frac12 \|\rho_\e |\vu_\e|^2(\tau)\|_{L^1(D_\e)}.
\end{align*}
Thus, we end up with the inequality
\begin{align*}
\int_{D_\e} \frac12 \rho_\e |\vu_\e|^2(\tau,\cdot) + \frac{\rho_\e^\gamma(\tau,\cdot)}{\gamma-1} \dd x + \int_0^\tau \int_{D_\e} \ms(\nabla\vu_\e):\nabla\vu_\e \dd x \dd t \leq C + \int_0^\tau \int_{D_\e} \frac12 \rho_\e |\vu_\e|^2 \dd x \dd t.
\end{align*}
Using Gr\"onwall's inequality, we conclude that
\begin{align*}
\sup_{t \in (0,T)}\int_{D_\e} \frac12 \rho_\e |\vu_\e|^2(t,\cdot) + \frac{\rho_\e^\gamma(t,\cdot)}{\gamma-1} \dd x + \int_0^T \int_{D_\e} \ms(\nabla\vu_\e):\nabla\vu_\e \dd x \dd t \leq C(T).
\end{align*}
Since we may extend both $\rho_\e$ and $\vu_\e$ by zero to $D$ without influencing the inequality, by virtue of the Korn and Poincar\'e inequalities we conclude easily.
\end{proof}
Let us moreover remark that the bounds \eqref{unifbds} immediately imply for the linear momentum
\begin{align*}
\|\rho_\e \vu_\e\|_{L^\infty(0,T;L^\frac{2\gamma}{\gamma+1}(D_\e))} &= \|\sqrt{\rho_\e} \sqrt{\rho_\e} \vu_\e\|_{L^\infty(0,T;L^\frac{2\gamma}{\gamma+1}(D_\e))} \\
&\leq \|\sqrt{\rho_\e}\|_{L^\infty(0,T;L^{2\gamma}(D_\e))} \|\sqrt{\rho_\e}\vu_\e\|_{L^\infty(0,T;L^2(D_\e))} \lesssim 1.
\end{align*}

\subsection{Improved integrability of the density}
The uniform bound for the density is not enough to pass to the limit in the pressure since we just have $p(\rho_\e)$ uniformly bounded in $L^\infty(0,T;L^1(D_\e))$. To pass to a suitable weakly convergent subsequence, we need the following
\begin{lemma}\label{lem:RegDens}
Let $\theta = \frac23 \gamma - 1$. Then
\begin{align*}
\int_0^T \int_{D_\e} \rho_\e^{\gamma+\theta} \lesssim 1.
\end{align*}
\end{lemma}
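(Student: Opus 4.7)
The plan is the classical Bogovski{\u\i} test-function argument, adapted to the perforated geometry. For each $t\in (0,T)$, define
\begin{align*}
\phi_\e(t,\cdot) := \mathcal{B}_{D_\e}\!\left[\rho_\e^\theta(t,\cdot) - \frac{1}{|D_\e|}\int_{D_\e} \rho_\e^\theta(t,y)\dd y\right],
\end{align*}
where $\mathcal{B}_{D_\e}$ is a right inverse of the divergence on $D_\e$ with zero Dirichlet data. Then $\phi_\e(t,\cdot) \in W_0^{1,q}(D_\e)$ with $\div\phi_\e = \rho_\e^\theta - \langle\rho_\e^\theta\rangle_{D_\e}$, and the first essential ingredient is the bound $\|\nabla\phi_\e\|_{L^q(D_\e)} \lesssim \|\rho_\e^\theta\|_{L^q(D_\e)}$ with a constant independent of $\e$. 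This is precisely the place where the geometric hypothesis $\alpha>3$ is used: under this condition, the holes are small enough for the standard construction of $\mathcal{B}_{D_\e}$ to go through with uniform constants (a by-now classical fact in the compressible homogenization literature).

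The vector field $\phi_\e$, multiplied by a smooth cutoff in time, is substituted into the weak momentum balance \eqref{wkMom}. On the pressure side this produces exactly $\int_0^T\!\int_{D_\e}\rho_\e^{\gamma+\theta}$ together with a harmless term $\int_0^T \langle\rho_\e^\theta\rangle_{D_\e}\int_{D_\e}\rho_\e^\gamma\dd t$, which is uniformly bounded by mass conservation and \eqref{unifbds}. The viscous contribution is estimated by H\"older's inequality from $\|\nabla\vu_\e\|_{L^2}\lesssim 1$ and the Bogovski{\u\i} bound above; the external-force term and the initial/terminal boundary contributions are routine.

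The two delicate pieces are the time-derivative and convective terms. For the former, $\partial_t\phi_\e$ is rewritten using the renormalized continuity equation \eqref{renCE} applied to $b(s)=s^\theta$, which is admissible because $\theta=\tfrac23\gamma-1\le\tfrac{5\gamma-3}{6}$ for every $\gamma>\tfrac32$; this turns $\int_0^T\!\int_{D_\e}\rho_\e\vu_\e\cdot\partial_t\phi_\e$ into expressions of the form $\mathcal{B}_{D_\e}(\div(\rho_\e^\theta\vu_\e))$ and $\mathcal{B}_{D_\e}(\rho_\e^\theta\div\vu_\e-\mathrm{mean})$ paired with $\rho_\e\vu_\e$, each of which is controlled by H\"older using \eqref{unifbds}. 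The convective term $\int\rho_\e\vu_\e\otimes\vu_\e:\nabla\phi_\e$ is the one that actually pins down the value of $\theta$: balancing the integrability of $\rho_\e|\vu_\e|^2$ (via Sobolev embedding of $\vu_\e$ together with the $L^\infty(L^\gamma)$ control of $\rho_\e$ and the interpolated space--time bounds obtained from \eqref{unifbds}) against the integrability of $\nabla\phi_\e \sim \rho_\e^\theta$ in a H\"older pairing selects exactly $\theta=\tfrac23\gamma-1$ as the exponent at which the inequality closes into a uniform bound.

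The main obstacle is therefore securing the uniform-in-$\e$ Bogovski{\u\i} bound on $D_\e$; once this is available, the remaining work is structurally identical to the classical higher-integrability estimate on a fixed bounded domain.
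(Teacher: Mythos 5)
Your proposal follows the same route as the paper: test the momentum equation with $\psi(t)\,\B_\e\bigl(\rho_\e^\theta-\langle\rho_\e^\theta\rangle\bigr)$, use the renormalized continuity equation with $b(s)=s^\theta$ to treat $\partial_t\B_\e^*$, and close the convective and time-derivative terms by H\"older/interpolation. However, there is a genuine imprecision in the paragraph where you claim that the uniform-in-$\e$ Bogovski\u{\i} bound ``is precisely the place where $\alpha>3$ is used.'' The operator norm from Theorem~\ref{thm:Bog} carries the factor $\bigl(1+\e^{(3-q)\alpha-3}\bigr)^{1/q}$, which is bounded in $\e$ only if $(3-q)\alpha\geq 3$, i.e., $\alpha\geq 3/(3-q)$; this threshold depends on $q$. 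For the viscous term ($q=2$) the threshold is indeed $\alpha\geq 3$, but for the convective term the H\"older pairing forces $q=\tfrac{3\gamma}{2\gamma-3}\in\bigl(\tfrac32,3\bigr)$ (this interval needs $\gamma>3$), and the corresponding threshold is $\alpha\geq\tfrac{2\gamma-3}{\gamma-3}$, which is \emph{strictly larger} than $3$ when $3<\gamma<6$ — exactly the new regime the paper targets. Your proposal omits this quantitative step, even though it is the crux of why condition \eqref{cond1} reads $\alpha>\max\{3,\tfrac{2\gamma-3}{\gamma-3}\}$ rather than $\alpha>3$.

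Two further points glossed over: (i) the viscous term $\int_0^T\!\int_{D_\e}\ms(\nabla\vu_\e):\nabla\B_\e^*$ cannot simply be closed by H\"older into a uniform bound for all $\gamma>3$ — the paper instead uses Young's inequality to produce $C+\tfrac14\int\psi\rho_\e^{\gamma+\theta}$ and absorbs the latter into the left-hand side; (ii) the decomposition of the time-derivative term produces $\B_\e\bigl(\div(\rho_\e^\theta\vu_\e)\bigr)$, which is meaningless for the $W_0^{1,q}$-valued Bogovski\u{\i} operator; the paper explicitly invokes its $L^r$-bounded extension acting on $\div\vc f$ (the lemma after Theorem~\ref{thm:Bog}, valid for $r>\tfrac32$), together with a nontrivial interpolation of $\rho_\e\vu_\e$ in mixed space-time norms. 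These are not ``routine H\"older'' but specific ingredients that must be secured for the argument to close.
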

Before we prove Lemma~\ref{lem:RegDens}, let us recall two known facts. First, a crucial tool for the sequel will be the Bogovski\u{\i} operator constructed in \cite[Theorem~2.3]{DieningFeireislLu2017}.
\begin{theorem}\label{thm:Bog}
Let $1 < q < \infty$. There exists a bounded linear operator
\begin{align*}
\B_\e: L_0^q(D_\e) = \Big\{ f\in L^q(D_\e) : \int_{D_\e} f \dd x = 0 \Big\} \to W_0^{1,q}(D_\e)
\end{align*}
such that for any $f\in L_0^q(D_\e)$,
\begin{align*}
\div \B_\e(f) = f,\quad \|\B_\e(f)\|_{W_0^{1,q}(D_\e)}^q \lesssim \Big(1+ \e^{(3-q)\alpha-3}\Big) \|f\|_{L^q(D_\e)}^q.
\end{align*}
\end{theorem}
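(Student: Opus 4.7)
The approach is the classical Bogovski\u{\i}-test-function argument of Feireisl-Novotn\'y-Petzeltov\'a, adapted to the perforated domain by invoking the $\e$-uniform Bogovski\u{\i} operator from Theorem~\ref{thm:Bog}. Fixing $\tau \in (0,T)$, I would test the weak momentum equation \eqref{wkMom} with
\begin{align*}
\phi_\e(t,x) = \psi_\delta(t)\, \B_\e\!\bigg(\rho_\e^\theta(t,\cdot) - \frac{1}{|D_\e|}\int_{D_\e}\rho_\e^\theta(t,y)\dd y\bigg),
\end{align*}
where $\psi_\delta$ is a Lipschitz cutoff approximating $\chi_{[0,\tau)}(t)$ (a standard mollification in time is needed for admissibility, which I skip). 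The pressure term then yields
\begin{align*}
\int_0^\tau\!\int_{D_\e}\! \rho_\e^{\gamma+\theta}\dd x\dd t = \int_0^\tau\!\bigg(\frac{1}{|D_\e|}\int_{D_\e}\rho_\e^\theta\bigg)\!\bigg(\int_{D_\e}\rho_\e^\gamma\bigg)\dd t + \text{(remaining terms)},
\end{align*}
and the first term on the right is immediately bounded since $\rho_\e \in L^\infty_t L^\gamma_x$ uniformly, $\theta<\gamma$, and $|D_\e|$ stays bounded away from zero for small $\e$.

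The remaining terms are controlled using \eqref{unifbds} together with Theorem~\ref{thm:Bog}. The natural exponent is $q = \gamma/\theta = 3\gamma/(2\gamma-3)$, for which $\rho_\e^\theta \in L^\infty_t L^q_x$ uniformly. For $\gamma>3$ one has $q<3$, and a direct computation shows $(3-q)\alpha - 3 > 0$ iff $\alpha > (2\gamma-3)/(\gamma-3)$, which is exactly hypothesis \eqref{cond1}; in that regime the prefactor $(1+\e^{(3-q)\alpha-3})^{1/q}$ in Theorem~\ref{thm:Bog} remains bounded as $\e\to 0$, so $\B_\e \colon L^q_0(D_\e)\to W^{1,q}_0(D_\e)$ is $\e$-uniformly continuous. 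The convective contribution is then estimated by H\"older, pairing $\rho_\e|\vu_\e|^2 \in L^1_t L^{3\gamma/(\gamma+3)}_x$ (from $\vu_\e \in L^2_t L^6_x$ via Sobolev and $\rho_\e\in L^\infty_t L^\gamma_x$) with $\nabla\phi_\e \in L^\infty_t L^q_x$, the two being H\"older-dual. The viscous term is handled by Cauchy-Schwarz $\|\nabla\vu_\e\|_{L^2_{t,x}}\|\nabla\phi_\e\|_{L^2_{t,x}}$, with $\|\nabla\phi_\e\|_{L^2_{t,x}}$ controlled by $\|\rho_\e^\theta\|_{L^2_{t,x}} = \|\rho_\e\|_{L^{2\theta}_{t,x}}^\theta$ via interpolation between $L^\infty_t L^\gamma_x$ and $L^{\gamma+\theta}_{t,x}$, producing an absorbable $\delta\int\rho_\e^{\gamma+\theta}$ after Young's inequality. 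The external force and initial-momentum terms are routine.

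The most delicate step is the $\del_t\phi_\e$ contribution. Using the renormalized continuity equation \eqref{renCE} with $b(s) = s^\theta$ (admissible because $\theta = \tfrac23\gamma - 1$ satisfies the growth conditions in Definition~\ref{def1} for $\gamma>3$), I would write $\del_t\rho_\e^\theta = -\div(\rho_\e^\theta\vu_\e) - (\theta-1)\rho_\e^\theta\div\vu_\e$. This splits $\del_t\phi_\e$ into two pieces; the piece involving $\B_\e(\div(\rho_\e^\theta\vu_\e))$ is re-expressed via a dual-type identity of the form $\|\B_\e(\div\vc g)\|_{L^p(D_\e)}\lesssim\|\vc g\|_{L^p(D_\e)}$ (valid for $\vc g=\rho_\e^\theta\vu_\e$ since $\vu_\e$ vanishes on $\del D_\e$), giving a direct bound in terms of $\rho_\e^\theta\vu_\e$ rather than its divergence. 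The main obstacle throughout is preserving $\e$-uniformity of all constants, which is precisely what forces the lower bound $\alpha>(2\gamma-3)/(\gamma-3)$ in \eqref{cond1}; with this in hand, absorbing the $\delta\int\rho_\e^{\gamma+\theta}$ term into the left-hand side concludes the proof.
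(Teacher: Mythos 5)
You have proved the wrong statement. Theorem~\ref{thm:Bog} asserts the \emph{existence} of a divergence inverse $\B_\e:L_0^q(D_\e)\to W_0^{1,q}(D_\e)$ on the perforated domain whose operator norm degenerates at the explicit rate $\big(1+\e^{(3-q)\alpha-3}\big)^{1/q}$. Your argument is instead a sketch of the proof of Lemma~\ref{lem:RegDens} (the improved integrability $\int_0^T\int_{D_\e}\rho_\e^{\gamma+\theta}\lesssim 1$), i.e.\ of an \emph{application} of Theorem~\ref{thm:Bog}: you test the momentum equation with $\psi(t)\,\B_\e\big(\rho_\e^\theta-\frac{1}{|D_\e|}\int_{D_\e}\rho_\e^\theta\big)$ and, as you say yourself, you do this ``by invoking the $\e$-uniform Bogovski\u{\i} operator from Theorem~\ref{thm:Bog}.'' As a proof of Theorem~\ref{thm:Bog} this is circular, and nothing in your text addresses what actually has to be shown, namely the construction of $\B_\e$ and the dependence of its norm on the geometry of the holes.

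What is required is a construction, not an energy estimate. The operator is obtained (as in the cited reference of Diening, Feireisl, and Lu, and as sketched in the remark after Theorem~\ref{thm:Restr}) by composing the standard Bogovski\u{\i} operator $\B_D$ on the unperforated domain with a restriction operator $\widehat{R}_\e:W_0^{1,q}(D)\to W_0^{1,q}(D_\e)$, i.e.\ $\B_\e(f)=\widehat{R}_\e\big(\B_D(\tilde f)\big)$; the restriction operator is built by local corrections on the cells around each ball $B_{\e^\alpha}(x_i(\e))$ (rescaled Bogovski\u{\i} problems on annuli), and the factor $\e^{(3-q)\alpha-3}$ arises from summing $N(\e)\lesssim\e^{-3}$ local contributions, each scaling like $\e^{(3-q)\alpha}$ by the capacity of a single hole. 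None of this appears in your proposal. (For what it is worth, your sketch of Lemma~\ref{lem:RegDens} is broadly in line with the paper's Section~3.2, including the treatment of $\del_t\B_\e^*$ via the renormalized continuity equation with $b(s)=s^\theta$ and the dual bound $\|\B_\e\div\vc g\|_{L^r}\lesssim\|\vc g\|_{L^r}$ — but that is answering a different question.)
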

We remark that a similar operator with the same scaling in $\e$ for any $\frac32 < q < 3$ was given in \cite{Lu2021}, the construction of which relies on the construction of a so-called restriction operator. We will come back to this in Section~\ref{sec32}.\\

Second, we recall a result from \cite[Proposition~2.2]{LuSchwarzacher2018}, which we state in form of a lemma.
\begin{lemma}
Let $\B_\e$ be the Bogovski\u{\i} operator from Theorem~\ref{thm:Bog}. Then, for any $r>\frac32$, we can extend $\B_\e$ to an operator
\begin{align*}
\B_\e: \Big\{ g=\div \vc f \in [W^{-1, r'}(D_\e)]' : \langle g, 1\rangle=0 \Big\} \to L^r(D_\e)
\end{align*}
such that
\begin{align*}
\langle \B_\e \div \vc f, \nabla \phi\rangle = \langle \vc f, \nabla \phi\rangle \ \text{for any} \ \phi\in W^{1,r'}(D_\e), \quad \|\B_\e \div \vc f\|_{L^r(D_\e)} \lesssim \|\vc f\|_{L^r(D_\e)}.
\end{align*}
\end{lemma}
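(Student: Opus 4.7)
My plan is to construct the extension via an $L^r$-$L^{r'}$ duality argument, exploiting Theorem~\ref{thm:Bog} once at $q = r$ and once at the dual exponent $q = r'$. The role of the hypothesis $r > \frac{3}{2}$ is precisely to guarantee $r' < 3$, making both instances simultaneously useful.

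For smooth $\vc f$ with $\div \vc f \in L^r_0(D_\e)$, the vector field $\B_\e(\div \vc f)$ is already defined by Theorem~\ref{thm:Bog} at $q = r$ and lies in $W^{1,r}_0(D_\e)$ with $\div \B_\e(\div \vc f) = \div \vc f$. Integration by parts, legitimate thanks to the zero trace, yields for every $\phi \in W^{1,r'}(D_\e)$
\begin{align*}
\int_{D_\e} \B_\e(\div \vc f) \cdot \nabla\phi \dd x = -\int_{D_\e} (\div\vc f)\, \phi \dd x = \int_{D_\e} \vc f \cdot \nabla\phi \dd x,
\end{align*}
which is exactly the claimed identity. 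The task therefore reduces to upgrading the bound from Theorem~\ref{thm:Bog}, which is in terms of $\|\div \vc f\|_{L^r}$, to the weaker $L^r$ bound $\|\B_\e(\div\vc f)\|_{L^r} \lesssim \|\vc f\|_{L^r}$ controlled by $\vc f$ alone; extension by density in $\{g = \div\vc f : \vc f \in L^r(D_\e;\R^3),\ \langle g, 1\rangle = 0\}$ then completes the construction.

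By $L^r$-$L^{r'}$ duality, this amounts to showing
\begin{align*}
\Big| \int_{D_\e} \B_\e(\div\vc f) \cdot \psi \dd x \Big| \lesssim \|\vc f\|_{L^r}\|\psi\|_{L^{r'}}
\end{align*}
for every $\psi \in L^{r'}(D_\e;\R^3)$. I would decompose $\psi = \nabla\eta + \chi$ Helmholtz-style, with $\eta \in W^{1,r'}(D_\e)$ and $\chi$ divergence-free satisfying $\chi\cdot n = 0$ on $\del D_\e$, so that the gradient piece is absorbed by the identity above with the estimate $\|\vc f\|_{L^r}\|\nabla\eta\|_{L^{r'}} \lesssim \|\vc f\|_{L^r}\|\psi\|_{L^{r'}}$. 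For the solenoidal piece, since $\chi$ has zero divergence and vanishing normal trace, one constructs a suitable vector potential through the companion operator $\B_\e^{r'}$ of Theorem~\ref{thm:Bog} at the dual exponent and integrates by parts to transfer the derivative back onto $\vc f$; the two contributions combine to the claimed bound.

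The main obstacle I anticipate is $\e$-uniformity of all constants. The Theorem~\ref{thm:Bog} scaling $1 + \e^{(3-q)\alpha - 3}$ stays uniformly bounded at $q = r'$ only when $(3 - r')\alpha \leq 3$, and this is precisely the point where the restriction $r > \frac{3}{2}$ together with the range of $\alpha$ prescribed by Theorem~\ref{thm1} becomes essential. Since the Lipschitz constant of $\del D_\e$ degenerates as $\e \to 0$, the classical $L^{r'}$-Helmholtz decomposition on $D_\e$ is not directly available with $\e$-uniform bounds, so the dual Bogovski\u{\i} operator serves as the substitute carrying the correct quantitative information.
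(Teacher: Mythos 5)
The paper does not prove this lemma: it cites it verbatim from \cite[Proposition~2.2]{LuSchwarzacher2018}, so there is no in-paper argument to match against. Your opening moves are fine — the identity $\int_{D_\e}\B_\e(\div\vc f)\cdot\nabla\phi=\int_{D_\e}\vc f\cdot\nabla\phi$ for smooth data is a correct integration by parts using the zero trace, and recasting the $L^r$ bound as an $L^r$-$L^{r'}$ duality estimate is the right framing — but the core of your argument has two genuine gaps, which you in fact half-acknowledge without resolving.

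First, the Helmholtz decomposition $\psi=\nabla\eta+\chi$ on $D_\e$ with $\e$-uniform $L^{r'}$ bounds is exactly what is not available (the constant depends on the geometry of the many small boundary components), and your remedy — ``the dual Bogovski\u{\i} operator serves as the substitute'' — is never made concrete: $\B_\e$ inverts $\div$ into $W^{1,q}_0$, it does not produce a Helmholtz decomposition nor a vector potential, so there is no companion operator that hands you $\eta$ or $A$ with $\chi=\curl A$. Second, even granting such an $A$ with vanishing tangential trace, integrating by parts in $\int\B_\e(\div\vc f)\cdot\curl A=\int\curl\big(\B_\e(\div\vc f)\big)\cdot A$ transfers a derivative onto $\B_\e(\div\vc f)$, not onto $\vc f$; you then only control the result by $\|\nabla\B_\e(\div\vc f)\|_{L^r}\lesssim(1+\e^{(3-r)\alpha-3})^{1/r}\|\div\vc f\|_{L^r}$, which is the quantity you were trying to avoid, not $\|\vc f\|_{L^r}$. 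The underlying point is that a right inverse of $\div$ has, in general, \emph{no} bound of the form $\|\mathcal{R}(\div\vc f)\|_{L^r}\lesssim\|\vc f\|_{L^r}$: one can add any divergence-free field to it. The lemma is therefore a structural property of the \emph{specific} operator $\B_\e$, and the actual proof in \cite{LuSchwarzacher2018} exploits its explicit construction (as a Calder\'on--Zygmund-type singular integral in the spirit of Geissert--Heck--Hieber on the star-shaped pieces, patched via the $\widehat R_\e$-corrections of \cite{DieningFeireislLu2017} near each hole); the restriction $r>\tfrac32$ then enters through the scaling of those local corrections, and the constant comes out $\e$-uniform without the extra constraint on $\alpha$ that your sketch would require. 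A soft duality-plus-Helmholtz argument cannot recover this.
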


We are now in the position to prove Lemma~\ref{lem:RegDens}.
\begin{proof}[Proof of Lemma~\ref{lem:RegDens}]
We remark that this Lemma was proven for $\gamma>6$ in \cite{LuSchwarzacher2018}. However, we need to deal with our value $\gamma>3$, so we recall the proof, and also give an easier argument how to deal with the term containing the time-derivative of the Bogovski\u{\i} operator. The idea is to test the momentum equation by
\begin{align*}
\phi(t,x) = \psi(t) \B_\e \Big(\rho_\e^\theta - \frac{1}{|D_\e|} \int_{D_\e} \rho_\e^\theta \dd x \Big), \quad \theta = \frac23 \gamma - 1,
\end{align*}
for some $\psi \in C_c^\infty([0,T))$. The proof then follows the same lines as \cite[pp.~77-82]{OschmannDiss2022}, once observed that $\gamma>3$ is enough to repeat the steps done there. For the sake of completeness, we will present the main steps of the proof here. Taking $\theta= \frac 23 \gamma-1$, $\gamma >3$, and $\psi(t)=1$ for $0\leq t\leq T-\delta$ for some small $\delta >0$, $\psi (t) \geq 0$ in $[0,T]$, we have
\begin{align}\label{eq1}
\begin{split}
& \int_0^T \int_{D_\e} \psi \rho_\e^{\gamma + \theta} \dd x \dd t = \int_0^T \psi \bigg( \int_{D_\e} \rho_\e^\gamma \dd x \bigg) \bigg( \frac{1}{|D_\e|} \int_{D_\e} \rho_\e^{\theta} \dd x \bigg) \dd t  - \int_{D_\e} \vc m_0 \cdot \B_\e^* \dd x \\
& - \int_0^T \int_{D_\e} \psi \rho_\e \vc f \cdot \B_\e^* \dd x \dd t
-\int_0^T \int_{D_\e} \psi' \rho_\e \vu_\e \cdot \B_\e^* \dd x \dd t +  \int_0^T \int_{D_\e} \psi \ms(\nabla \vu_\e):\nabla \B_\e^* \dd x \dd t \\
& - \int_0^T \int_{D_\e} \psi \rho_\e \vu_\e\otimes \vu_\e : \nabla \B_\e^* \dd x \dd t -\int_0^T \int_{D_\e} \psi \rho_\e \vu_\e \cdot \partial_t \B_\e^* \dd x \dd t  = \sum_{i=1}^7 I_i.
\end{split}
\end{align}
Above, 
\begin{align*}
\B_\e^* = \B_\e \Big(\rho_\e^{\theta} - \frac{1}{|D_\e|} \int_{D_\e} \rho_\e^{\theta}\Big).
\end{align*}
Clearly, the most restrictive terms are the three last ones and we will only concentrate on estimates of them. First, note that due to \eqref{cond1} we always have $\alpha>3$. It is easy to see that
\begin{align*}
&|I_5| \lesssim   \|\nabla \vu_\e\|_{L^2((0,T)\times D_\e)} \|\sqrt{\psi} \nabla \B_\e^*\|_{L^2((0,T)\times D_\e)} \\
&\lesssim (1+ \e^{\alpha -3})^\frac12 \Big(\int_0^T \int_{D_\e} \psi \rho_\e^{2\theta} \dd x \dd t\Big)^{\frac 12} 
 \leq C + \frac 14 \int_0^T \int_{D_\e} \psi \rho_\e^{\gamma + \theta} \dd x \dd t    
\end{align*} 
and the last term can be transferred to the left-hand side of \eqref{eq1}. Next,
\begin{align*}
&|I_6| \lesssim \|\rho_\e\|_{L^\infty(0,T; L^\gamma(D_\e))} \|\vu_\e\|_{L^2(0,T;L^6(D_\e))}^2 \|\nabla \B_\e^*\|_{L^\infty(0,T; L^{\frac{3\gamma}{2\gamma-3}}(D_\e))} \\
&\lesssim \big(1+ \e^{(3-\frac{3\gamma}{2\gamma-3})\alpha -3}\big)^\frac{2\gamma-3}{3\gamma} \|\rho_\e\|_{L^\infty(0,T; L^\gamma(D_\e))}^{\theta} \|\vu_\e\|_{L^2(0,T;L^6(D_\e))}^2 \lesssim \big(1+ \e^{ 3\alpha\frac{\gamma-3}{2\gamma-3}-3}\big)^\frac{2\gamma-3}{3\gamma} \lesssim 1
\end{align*}
due to $\frac{3\gamma}{2\gamma-3} \in (\tfrac 32,3)$ and \eqref{cond1}. The last term is more complex. Using the renormalized continuity equation \eqref{renCE} with $b(s)=s^\theta$, i.e.,
\begin{align*}
\partial_t \rho_\e^\theta + \div(\rho_\e^\theta \vu_\e) + (\theta-1)\rho_\e^\theta \div \vu_\e = 0 \qquad \text{ in } \mathcal D'((0,T)\times D_\e),
\end{align*}
we get that
\begin{align*} 
&I_7 = \int_0^T \int_{D_\e} \psi \rho_\e \vu_\e \cdot \B_\e(\div(\rho_\e^{\theta} \vu_\e)) \dd x \dd t  \\
& + (\theta-1) \int_0^T \int_{D_\e} \psi \rho_\e \vu_\e \cdot \B_\e\Big(\rho_\e^{\theta}\div \vu_\e- \frac{1}{|D_\e|} \int_{D_\e} \rho_\e^{\theta}\div\vu_\e \dd x \Big) \dd x \dd t  = I_{7,1} + I_{7,2}.
\end{align*}
In order to estimate the term $I_{7,1}$, we choose the space and time exponent in the estimate of the Bogovski\u{\i} operator in such a way that the density coming from the Bogovski\u{\i} operator will be estimated precisely in the $L^{\gamma + \theta}$-norm over the space-time so that this term will be finally absorbed into the left-hand side of \eqref{eq1}. Recalling the estimate of the linear momentum
\begin{align*}
\|\rho_\e\vu_\e\|_{L^\infty(0,T;L^{\frac{2\gamma}{\gamma+1}}(D_\e))} \lesssim 1,
\end{align*}
and, in view of the bound
\begin{align*}
\|\rho_\e\|_{L^\infty(0,T;L^\gamma(D_\e))} + \|\vu_\e\|_{L^2(0,T;L^6(D_\e))} \lesssim 1,
\end{align*}
we also have
\begin{align*}
\|\rho_\e \vu_\e\|_{L^2(0,T;L^\frac{6\gamma}{\gamma+6}(D_\e))} \lesssim 1.
\end{align*}
We use interpolation between Lebesgue spaces to obtain
\begin{align*}
\|\rho_\e \vu_\e\|_{L^\frac{2(5\gamma-3)}{\gamma+3}(0,T;L^\frac{6(5\gamma-3)}{13\gamma+3}(D_\e))} \leq \|\rho_\e \vu_\e\|_{L^\infty(0,T;L^\frac{2\gamma}{\gamma+1}(D_\e))}^{1-\varpi} \|\rho_\e \vu_\e\|_{L^2(0,T;L^\frac{6\gamma}{\gamma+6}(D_\e))}^\varpi \lesssim 1,
\end{align*}
where $\varpi = \frac{\gamma+3}{5\gamma-3} \in (\frac15, \frac12)$. Hence, by $\frac{6(5\gamma-3)}{17\gamma-21} > \frac 32$ for any $\gamma>\frac{21}{17}$, we get
\begin{align*}
&|I_{7,1}| \lesssim \|\rho_\e \vu_\e\|_{L^{\frac{2(5\gamma-3)}{\gamma+3}}(0,T; L^{\frac{6(5\gamma-3)}{13\gamma +3}}(D_\e))} \|\psi^\frac{\theta}{\gamma+\theta} \B_\e(\div(\rho_\e^{\theta} \vu_\e))\|_{L^{\frac{2(5\gamma-3)}{9(\gamma-1)}}(0,T;L^{\frac{6(5\gamma-3)}{17\gamma-21}}(D_\e))} \\
&\lesssim \|\psi^\frac{\theta}{\gamma+\theta} \rho_\e^{\theta} \vu_\e\|_{L^{\frac{2(5\gamma-3)}{9(\gamma-1)}}(0,T;L^{\frac{6(5\gamma-3)}{17\gamma-21}}(D_\e))}.
\end{align*}
Further, note that by $\theta = \frac23 \gamma - 1$,
\begin{align*}
\frac{9(\gamma-1)}{2(5\gamma-3)} = \frac{\theta}{\gamma+\theta} + \frac12,\qquad \frac{17\gamma-21}{6(5\gamma-3)} = \frac{\theta}{\gamma+\theta} + \frac16,
\end{align*}
so we finally arrive at
\begin{align*}
|I_{7,1}| &\lesssim \|\psi^\frac{\theta}{\gamma+\theta} \rho_\e^{\theta} \vu_\e\|_{L^{\frac{2(5\gamma-3)}{9(\gamma-1)}}(0,T;L^{\frac{6(5\gamma-3)}{17\gamma-21}}(D_\e))} \lesssim \|\psi^\frac{\theta}{\gamma+\theta} \rho_\e^\theta\|_{L^\frac{\gamma+\theta}{\theta}((0,T)\times D_\e)} \|\vu_\e\|_{L^2(0,T;L^6(D_\e))} \\
&= \|\psi^\frac{1}{\gamma+\theta}\rho_\e\|_{L^{\gamma+\theta}((0,T)\times D_\e)}^\theta \|\vu_\e\|_{L^2(0,T;L^6(D_\e))} \lesssim 1 + \frac14 \int_0^T \int_{D_\e} \psi \rho_\e^{\gamma+\theta} \dd x \dd t.
\end{align*}
The second term on the right-hand side can be absorbed by the left-hand side of equation \eqref{eq1}. Similarly we proceed in the estimate of $I_{7,2}$. Abbreviating $\langle \rho_\e^\theta \div \vu_\e \rangle = \frac{1}{|D_\e|} \int_{D_\e} \rho^\theta \div \vu_\e \dd x$, by the Sobolev embedding $W_0^{1,\frac{2(5\gamma-3)}{9(\gamma-1)}} \subset L^\frac{6(5\gamma-3)}{17\gamma-21}$, we have

\begin{align*}
|I_{7,2}| &\lesssim \|\rho_\e \vu_\e\|_{L^{\frac{2(5\gamma-3)}{\gamma+3}}(0,T; L^{\frac{6(5\gamma-3)}{13\gamma +3}}(D_\e))} \|\psi^\frac{\theta}{\gamma+\theta} \B_\e(\rho_\e^{\theta} \div\vu_\e - \langle \rho_\e^\theta \div \vu_\e \rangle)\|_{L^{\frac{2(5\gamma-3)}{9(\gamma-1)}}(0,T;L^{\frac{6(5\gamma-3)}{17\gamma-21}}(D_\e))} \\
&\lesssim \|\psi^\frac{\theta}{\gamma+\theta} \nabla \B_\e(\rho_\e^{\theta} \div\vu_\e - \langle \rho_\e^\theta \div \vu_\e \rangle)\|_{L^{\frac{2(5\gamma-3)}{9(\gamma-1)}}(0,T;L^{\frac{2(5\gamma-3)}{9(\gamma-1)}}(D_\e))} \\
&\lesssim \big( 1 + \e^{(3-\frac{2(5\gamma-3)}{9(\gamma-1)})\alpha-3} \big)^\frac{9(\gamma-1)}{2(5\gamma-3)} \|\psi^\frac{\theta}{\gamma+\theta} \rho_\e^\theta \div \vu_\e\|_{L^\frac{2(5\gamma-3)}{9(\gamma-1)}((0,T)\times D_\e)}\\
&\lesssim \|\psi^\frac{\theta}{\gamma+\theta} \rho_\e^\theta\|_{L^\frac{\gamma+\theta}{\theta}((0,T)\times D_\e)} \|\div \vu_\e\|_{L^2((0,T)\times D_\e)}
 \lesssim 1 + \frac 14 \int_0^T \int_{D_\e} \psi \rho_\e^{\gamma + \theta}\dd x \dd t. 
\end{align*}
Note that the exponent of $\e$ is non-negative due to $3-\frac{2(5\gamma-3)}{9(\gamma-1)}=\frac{17\gamma-21}{9(\gamma-1)} > 1$ for any $\gamma>\frac32$. Again, the last term on the right-hand side can be absorbed into the left-hand side of \eqref{eq1}. The lemma is proved.
\end{proof}

\subsection{Refined pressure estimates}\label{sec32}
A key tool in our analysis is a suitable extension of the pressure and corresponding bounds. To this end, we recall the following theorem from \cite[Theorem~2.1]{Lu2021}.
\begin{theorem}\label{thm:Restr}
For any $1 < q < \infty$, there exists a bounded linear operator
\begin{align*}
R_\e: W_0^{1,q}(D) \to W_0^{1,q}(D_\e)
\end{align*}
such that
\begin{align*}
&R_\e \tvu = \vu \ \text{for any} \ \vu \in W_0^{1,q}(D_\e),\\
&\div \vu = 0 \Rightarrow \div R_\e \vu = 0.
\end{align*}
Moreover, for any $\frac32 < q < 3$,
\begin{align*}
\|R_\e \vu\|_{W_0^{1,q}(D_\e)}^q \lesssim \Big(1+ \e^{(3-q)\alpha-3}\Big) \|\nabla \vu\|_{L^q(D)}^q.
\end{align*}
\end{theorem}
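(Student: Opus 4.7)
The plan is to construct $R_\e$ cell-by-cell using the disjoint family $B_i := B_\e(x_i(\e))$ from \eqref{defKeps}, which contain the hole components $K_i := K_\e \cap B_i \subset B_{\e^\alpha}(x_i(\e)) =: T_i$. Outside $\bigcup_i B_i$ one sets $R_\e \vu := \vu$; on each $K_i$ one requires $R_\e \vu := 0$; and inside each $B_i$ one modifies $\vu$ by a local correction $w_i$, writing $R_\e \vu := \vu - w_i$, so that the boundary conditions on $\partial K_i$ and $\partial B_i$ are respected. The disjointness $|x_i(\e) - x_j(\e)| > 2\e$ ensures that these cellwise assignments glue into an element of $W_0^{1,q}(D_\e)$.

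A concrete linear construction takes $w_i := \vu$ on $K_i$ and defines $w_i$ on the annular region $B_i \setminus K_i$ as the unique Stokes solution with $w_i|_{\partial K_i} = \vu|_{\partial K_i}$, $w_i|_{\partial B_i} = 0$, and constant divergence $\div w_i \equiv -\tfrac{1}{|B_i \setminus K_i|}\int_{K_i}\div \vu$, whose Stokes compatibility is verified by the divergence theorem. Uniqueness makes $w_i$ linear in $\vu$, and two consequences follow: (a) if $\vu = \tvu$ is the zero-extension of some $\vu \in W_0^{1,q}(D_\e)$, then $\tvu \equiv 0$ on $K_i$ makes all data vanish, hence $w_i \equiv 0$ and $R_\e \tvu = \vu$ on $D_\e$; (b) if $\div \vu \equiv 0$ globally, then the divergence datum vanishes, $\div w_i \equiv 0$, and $\div R_\e \vu = 0$. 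For explicit estimates, $w_i$ on the annular region is better realized via a smooth radial cutoff $\phi_i$ (equal to $1$ near $\partial K_i$, vanishing on $\partial B_i$, with $|\nabla \phi_i| \lesssim \e^{-1}$) and a Bogovski\u{\i} divergence corrector,
\[
w_i = \phi_i \vu - \B\bigl(\nabla \phi_i \cdot \vu + \phi_i \div \vu - c_i\bigr),
\]
with $c_i \in \R$ the mean-zero correction and the Bogovski\u{\i} operator acting on the annular region.

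The principal analytical task is the quantitative estimate for $\tfrac32 < q < 3$. After the rescaling $y := (x - x_i(\e))/\e$, the model annulus (of outer radius $\e$ and inner radius of order $\e^\alpha$) becomes the reference annulus $\widehat A := B_1 \setminus B_r$ with $r := \e^{\alpha - 1}$, and the Bogovski\u{\i} bound on $\widehat A$ from \cite{DieningFeireislLu2017} takes the form
\[
\|\nabla \B_{\widehat A} g\|_{L^q(\widehat A)}^q \lesssim \bigl(1 + r^{3-q}\bigr) \|g\|_{L^q(\widehat A)}^q.
\]
Transferring this back to the physical scale, combining with $\|\nabla \phi_i\|_{L^\infty} \lesssim \e^{-1}$ and Poincar\'e on each cell, and summing over the $N(\e) \lesssim \e^{-3}$ cells produces the target $(1 + \e^{(3-q)\alpha - 3})\|\nabla \vu\|_{L^q(D)}^q$ bound. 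The main obstacle, in my view, is securing the sharp $q$-dependence in the Bogovski\u{\i} bound on $\widehat A$: for $q < 3$ the operator norm genuinely degenerates as $r \to 0$, so one cannot invoke a generic Lipschitz-domain estimate but must rely on the capacity-aware Calder\'on-Zygmund construction of \cite{DieningFeireislLu2017}.
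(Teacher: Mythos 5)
The paper does not prove this theorem; it cites \cite[Theorem~2.1]{Lu2021} and uses the statement as a black box, so there is no proof in the paper to compare against. That said, your construction follows the standard Tartar--Allaire--Lu cell-by-cell pattern, and the verification of the two algebraic properties (left inverse of zero extension, preservation of the divergence-free constraint via the mean-zero correction $c_i$ and the boundary term computation) is correct. The fact that the sharp $q$-dependence of the Bogovski\u{\i} norm on the rescaled annulus is the engine behind the estimate is also correctly identified.

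However, the quantitative step has a genuine gap. With a single cutoff $\phi_i$ that is $1$ near the hole, $0$ on $\partial B_\e(x_i)$, and $|\nabla\phi_i|\lesssim \e^{-1}$, the term $\nabla\phi_i\otimes\vu$ in $\nabla(\phi_i\vu)$ cannot be controlled by $\|\nabla\vu\|_{L^q}$ with the claimed constant. Since $\vu$ need not vanish or have zero mean on $B_\e(x_i)$, ``Poincar\'e on each cell'' is not available; the only Poincar\'e inequality at hand is the global one on $D$, which yields $\sum_i \|\nabla\phi_i\otimes\vu\|_{L^q(B_i)}^q \lesssim \e^{-q}\|\vu\|_{L^q(D)}^q \lesssim \e^{-q}\|\nabla\vu\|_{L^q(D)}^q$. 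Because $\alpha>1$ forces $(3-q)\alpha-3 > -q$, one has $\e^{-q}\gg 1+\e^{(3-q)\alpha-3}$ as $\e\to 0$, so your bound is strictly (and unboundedly) weaker than the target. The capacity heuristic you invoke at the end already signals what is missing: to realize $\e^{(3-q)\alpha-3}$ one must split $\vu = (\vu-c_i) + c_i$ with $c_i$ a local average, apply a cell-scale cutoff (gradient $\lesssim\e^{-1}$) only to the oscillation $\vu-c_i$ — for which cellwise Poincar\'e does hold — and use a second, hole-scale cutoff with gradient $\lesssim\e^{-\alpha}$ supported on a shell of measure $\sim\e^{3\alpha}$ to kill the constant $c_i$. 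That hole-scale cutoff costs $|c_i|^q\e^{(3-q)\alpha}$ per cell, and summing $|c_i|^q\lesssim\e^{-3}\|\vu\|_{L^q(B_i)}^q$ over $N(\e)\lesssim\e^{-3}$ cells is precisely what produces $\e^{(3-q)\alpha-3}\|\nabla\vu\|_{L^q(D)}^q$. The divergence corrector must then be adapted to this two-scale splitting; this is what the construction in \cite{Lu2021} actually does, and it is the ingredient your proposal omits.
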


\begin{rem}
As a matter of fact, the operator $\B_\e$ from Theorem~\ref{thm:Bog} also relies on the construction of a similar, but different restriction operator $\widehat{R}_\e$ in the sense that $\B_\e(f) = \widehat{R}_\e (\B_D(\tilde{f}))$ for any $f\in L_0^q(D_\e)$, where $\B_D$ is the Bogovski\u{\i} operator on $D$ satisfying
\begin{align*}
\B_D: L_0^q(D) \to W_0^{1,q}(D),\quad \div \B_D(f) = f, \quad \|\B_D(f)\|_{W_0^{1,q}(D)} \lesssim \|f\|_{L^q(D)},
\end{align*}
see \cite{Bogovskii1980, Galdi2011}. The important point is that the operator $\widehat{R}_\e$ does not satisfy $\widehat{R}_\e \tvu = \vu$ for any $\vu \in W_0^{1,q}(D_\e)$, which will be a crucial property in our analysis below.
\end{rem}

We now use the operator $R_\e$ from Theorem~\ref{thm:Restr} to define similarly to \cite{Allaire1990a} and \cite{Masmoudi2002} a pressure extension $P_\e$ as
\begin{align*}
\int_0^T \langle \nabla P_\e, \phi\rangle_{\Test', \Test(D)} \dd t = \int_0^T \langle \nabla p(\rho_\e), R_\e \phi\rangle_{\Test', \Test(D_\e)} \dd t \ \text{for any} \ \phi \in C_c^\infty([0,T) \times D).
\end{align*}
It follows from \cite[Proposition~1.1.4]{Allaire1990a} that $P_\e = p(\rho_\e)$ on $D_\e$, up to an additive constant. Then, using the second equation of \eqref{NSE}, we deduce
\begin{align*}
\int_0^T \langle \nabla p(\rho_\e) , R_\e \phi\rangle_{\Test', \Test(D_\e)} \dd t &= \int_0^T \int_{D_\e} \rho_\e \vu_\e \cdot \del_t R_\e \phi \dd x \dd t + \int_0^T \int_{D_\e} \rho_\e \vu_\e \otimes \vu_\e : \nabla R_\e \phi \dd x \dd t \\
&\quad - \int_0^T\int_{D_\e} \ms(\nabla \vu_\e):\nabla R_\e \phi \dd x \dd t + \int_0^T\int_{D_\e} \rho_\e \vc f \cdot R_\e \phi \dd x \dd t\\
&\quad + \int_{D_\e} \vc m_{\e,0} \cdot R_\e \phi(0,\cdot) \dd x\\
&= \sum_{j=1}^5 I_j.
\end{align*}

We estimate each term separately and start with $I_2$, which is the most restrictive one. Recalling the Sobolev embedding $W_0^{1,2} \subset L^6$ and the uniform bounds \eqref{unifbds}, we get
\begin{align*}
I_2 &\lesssim \|\rho_\e |\vu_\e|^2\|_{L^1(0,T; L^\frac{3\gamma}{\gamma+3}(D_\e))} \|\nabla R_\e \phi\|_{L^\infty(0,T; L^\frac{3\gamma}{2\gamma-3}(D_\e))} \\
&\lesssim \|\rho_\e\|_{L^\infty(0,T; L^\gamma(D_\e))} \|\vu_\e\|_{L^2(0,T; L^6(D_\e))}^2 \Big( 1+ \e^{(3 - \frac{3\gamma}{2\gamma-3})\alpha - 3} \Big)^\frac{2\gamma-3}{3\gamma} \|\nabla \phi\|_{L^\infty(0,T; L^\frac{3\gamma}{2\gamma-3}(D))}.
\end{align*}
Note that the bound for $I_2$ is uniform since
\begin{align}\label{jedna}
3-\frac{3\gamma}{2\gamma-3}=\frac{3(\gamma-3)}{2\gamma-3},
\end{align}
which is governed precisely by condition \eqref{cond1}.

For $I_1$, we apply the Sobolev embedding $W_0^{1,\frac{6\gamma}{5\gamma-3}} \subset L^\frac{2\gamma}{\gamma-1}$ as well as $\frac{6\gamma}{5\gamma-3}<2$ for any $\gamma>\frac32$ to get
\begin{align*}
I_1 &\lesssim \|\rho_\e \vu_\e\|_{L^2(0,T; L^\frac{2\gamma}{\gamma+1}(D_\e))} \|\del_t R_\e \phi\|_{L^2(0,T; L^\frac{2\gamma}{\gamma-1}(D_\e))} \lesssim \|\del_t R_\e \phi\|_{L^2(0,T; W_0^{1,\frac{6\gamma}{5\gamma-3}}(D_\e))}\\
& \lesssim \|\del_t R_\e \phi\|_{L^2(0,T; W_0^{1,2}(D_\e))} \lesssim \big( 1 + \e^{\alpha - 3}\big)^\frac12 \|\del_t \nabla \phi\|_{L^2(0,T; L^2(D))}\\
&\lesssim \|\del_t \nabla \phi\|_{L^2((0,T) \times D)},
\end{align*}
where we used that $\alpha>3$ by \eqref{cond1}.
For $I_5$, we use the fact that $\phi(T)=0$ to write
\begin{align*}
\phi(\tau) = -\int_\tau^T \del_t \phi \dd t \Rightarrow \|\phi\|_{L^\infty(0,T)} \leq \int_0^T |\del_t \phi| \dd t \lesssim \|\del_t \phi\|_{L^2(0,T)}
\end{align*}
and estimate similar to $I_1$
\begin{align*}
I_5 &\lesssim \|\vc m_{\e, 0}\|_{L^\frac{2\gamma}{\gamma+1}(D_\e)} \|R_\e \phi(0)\|_{L^\frac{2\gamma}{\gamma-1}(D_\e)} \lesssim \|R_\e \phi(0)\|_{W_0^{1,\frac{6\gamma}{5\gamma-3}}(D_\e)} \\
& \lesssim \big( 1 + \e^{\alpha - 3}\big)^\frac12 \|R_\e \phi(0)\|_{W_0^{1,2}(D_\e)} \lesssim \|\nabla \phi(0)\|_{L^2(D)}\\
& \lesssim \|\nabla \phi\|_{L^\infty(0,T;L^2(D))} \lesssim \|\del_t \nabla \phi\|_{L^2((0,T) \times D)},
\end{align*}
where we also used that due to the initial conditions \eqref{initConv}, we have
\begin{align*}
\|\vc m_{\e, 0}\|_{L^\frac{2\gamma}{\gamma+1}(D_\e)} \leq \left\|\frac{\tilde{\vc m}_{\e, 0}}{\sqrt{\tvr_{\e, 0}}}\right\|_{L^2(D)} \left\|\sqrt{\tvr_{\e, 0}} \right\|_{L^{2\gamma}(D)} \lesssim 1.
\end{align*}

For $I_3$ and $I_4$, we use the assumption $\alpha > 3$ to estimate
\begin{align*}
I_3 + I_4 &\lesssim \Big( \|\ms(\nabla \vu_\e)\|_{L^2(0,T; L^2(D_\e))} + \|\rho_\e\|_{L^2(0,T; L^2(D_\e))} \Big) \|R_\e \phi\|_{L^2(0,T; W_0^{1,2}(D_\e))}\\
&\lesssim \big (1+ \e^{\alpha-3} \big)^\frac12 \|\nabla \phi\|_{L^2(0,T; L^2(D))} \lesssim \|\nabla \phi\|_{L^2(0,T; L^2(D))}.
\end{align*}

Altogether, we have
\begin{align*}
\nabla P_\e \ \text{uniformly bounded in} \ &L^1(0,T; W^{-1, \frac{3\gamma}{\gamma+3}}(D))  + \widehat{W}^{-1,2}(0,T; W^{-1, 2}(D))\\
& + L^2(0,T; W^{-1,2}(D)),
\end{align*}
where $\widehat{W}^{-1,2}(0,T)$ is the dual space to $(W^{1,2}(0,T) \cap \{\phi(T)=0\}, \|\del_t \cdot\|_{L^2(0,T)})$.  This implies
\begin{align*}
P_\e \ \text{uniformly bounded in} \ L^1(0,T; L^\frac{3\gamma}{\gamma+3}(D)) + \widehat{W}^{-1,2}(0,T; L^2(D)) + L^2(0,T; L^2(D)),
\end{align*}
which we can simplify to
\begin{align}\label{pressDec}
P_\e \ \text{uniformly bounded in} \ L^1(0,T;L^{\min \{2, \frac{3\gamma}{\gamma+3} \}}(D)) + \widehat{W}^{-1,2}(0,T;L^2(D)).
\end{align}

\begin{rem}\label{rem1}
We remark that $\frac{3\gamma}{\gamma+3}>\frac32$ for any $\gamma>3$, which coincides with the observation made in \cite[Section~1.2.2]{LuSchwarzacher2018} that the pressure should be at least in $L^\frac32$ in space, leading to $\gamma>6$ for the pure bound $p(\rho_\e) \in L^{\frac53 - \frac1\gamma}((0,T) \times D_\e)$ obtained in Lemma~\ref{lem:RegDens}.
\end{rem}

\begin{rem}\label{rem2}
Following the proof of \cite[Propositions~1.1.4 and 2.1.2]{Allaire1990a} and defining $C_i^\e = B_\e(x_i(\e)) \setminus B_{\e^\alpha}(x_i(\e))$, we find that $P_\e$ has the explicit representation
\begin{align*}
P_\e = \begin{cases}
p(\rho_\e) & \text{in } (0,T) \times D_\e,\\
\frac{1}{|C_i^\e|} \int_{C_i^\e} p(\rho_\e) \dd x & \text{in } (0,T) \times B_{\e^\alpha}(x_i(\e)), \ i=1,...,N(\e).
\end{cases}
\end{align*}
From this and Lemma~\ref{lem:RegDens}, we infer that
\begin{align*}
P_\e \text{ uniformly bounded in } L^\infty(0,T;L^1(D)) \cap L^{\frac53 - \frac1\gamma}((0,T) \times D).
\end{align*}
{\cre In particular, the part of $P_\e$ belonging to $\widehat{W}^{-1,2}(0,T;L^2(D))$ is actually a function rather than a mere distribution.}
\end{rem}

\section{Convergences}\label{sec:Conv}
\subsection{Limiting functions}
As a consequence of the uniform bounds given in Section~\ref{sec:Bds}, we obtain, at least for a subsequence,
\begin{align*}
\tvu_\e &\weak \vu \ \text{weakly in} \ L^2(0,T;W_0^{1,2}(D)),\\
\tvr_\e &\weak^\ast \rho \ \text{weakly-$\ast$ in} \ L^\infty(0,T;L^\gamma(D)) \ \text{and weakly in} \ L^{\frac53 \gamma - 1}((0,T) \times D),\\
p(\tvr_\e) &\weak \overline{p(\rho)} \ \text{weakly in} \ L^{\frac53-\frac1\gamma}((0,T)\times D).
\end{align*}

\subsection{Limit in the continuity equation}
Repeating the arguments given in \cite{LuSchwarzacher2018, PokornySkrisovski2021}, we recover
\begin{align}\label{approx_cont}
&\del_t \tvr_\e + \div (\tvr_\e \tvu_\e) = 0 \ \text{in} \ \Test'((0,T) \times \R^3)
\end{align}
as well as, after the limit passage,
\begin{align*}
\del_t \tvr + \div (\tvr \tvu) = 0 \ \text{in} \ \Test'((0,T) \times \R^3).
\end{align*}
Indeed, from the uniform bounds on $\tilde{\rho}_\e$ and $\tilde{\vc u}_\e$ derived in Section~\ref{sec:Bds}, we have
\begin{align*}
\|\tilde{\rho}_\e\tilde{\vc u}_\e\|_{L^\infty(0,T;L^\frac{2\gamma}{\gamma+1}(D))}\leq \|\sqrt{\tilde{\rho}_\e}\|_{L^\infty(0,T;L^{2\gamma}(D))} \|\sqrt{\tilde{\rho}_\e}\tilde{\vc u}_\e\|_{L^\infty(0,T;L^2(D))}\leq C.
\end{align*}
Moreover, by \eqref{approx_cont}, we have
\begin{align*}
\del_t \tilde{\rho}_\e \text{ bounded in } L^2(0,T;W^{-1,p}(D)) \text{ for some } p>1.
\end{align*}
Applying \cite[Lemma~5.1]{Lions1998} now shows
\begin{align*}
\tilde{\rho}_\e\tilde{\vc u}_\e \to \rho\vc u \text{ in } \mathcal{D}'((0,T)\times D).
\end{align*}
Furthermore, an Aubin-Lions type argument yields
\begin{align}\label{AL}
\tilde{\rho}_\e \to \rho \text{ in } C(0,T;L_{\rm weak}^\gamma(D)),\quad \tilde{\rho}_\e \tilde{\vc u}_\e \to \rho \vc u \text{ in } C(0,T;L_{\rm weak}^\frac{2\gamma}{\gamma+1}(D)).
\end{align}
A similar argument applies to $\tilde{\rho}_\e\tilde{\vc u}_\e\otimes\tilde{\vc u}_\e$, which is needed in the limit passage in the momentum equation below.
As a consequence of \cite[Lemma~6.9]{Novotny2004}, the couples $(\tvr_\e, \tvu_\e)$ as well as $(\tvr, \tvu)$ also fulfil the renormalized continuity equation in $\Test'((0,T) \times \R^3)$, that is, the equations given in \eqref{renCE}.

\subsection{Limit in the momentum equation}
Before passing with $\e\to 0$ in the momentum equation, we state the following lemma, which is an immediate consequence of \eqref{defKeps}.
\begin{lemma}\label{lem:Cutoff}
There exist functions $g_\e \in C^\infty(D)$ such that
\begin{align*}
g_\e = 0 \ \text{on} \ \bigcup_{i=1}^{N(\e)} B_{\e^\alpha}(x_i(\e)),\quad g_\e = 1 \ \text{on} \ \bigcup_{i=1}^{N(\e)} B_{2 \e^\alpha}(x_i(\e)),
\end{align*}
and satisfying
\begin{align*}
\|1-g_\e\|_{L^q(D)}^q\lesssim \e^{3(\alpha-1)},\quad \|\nabla g_\e\|_{L^q(D)}^q\lesssim \e^{(3-q)\alpha - 3}.
\end{align*}
\end{lemma}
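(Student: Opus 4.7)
The plan is a standard ``sum of radially-symmetric bump cut-offs'' construction, using only the geometric separation encoded in \eqref{defKeps}. First, I would fix once and for all a smooth profile $\phi\colon[0,\infty)\to[0,1]$ with $\phi\equiv 0$ on $[0,1]$, $\phi\equiv 1$ on $[2,\infty)$, and $|\phi'|\leq 2$. For each $i\in\{1,\dots,N(\e)\}$ I then set
\[
\chi_i^\e(x) := \phi\bigl(|x-x_i(\e)|/\e^\alpha\bigr),
\]
so that $\chi_i^\e\in C^\infty(\R^3)$, $\chi_i^\e\equiv 0$ on $B_{\e^\alpha}(x_i(\e))$, $\chi_i^\e\equiv 1$ outside $B_{2\e^\alpha}(x_i(\e))$, and $|\nabla\chi_i^\e|\lesssim \e^{-\alpha}$ with $\nabla\chi_i^\e$ supported in the annulus $B_{2\e^\alpha}(x_i(\e))\setminus B_{\e^\alpha}(x_i(\e))$.

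The key structural observation is the disjointness of the ``fattened'' hole-balls: by \eqref{defKeps} one has $|x_i(\e)-x_j(\e)|>2\e$ for $i\neq j$, and since $\alpha>1$ we have $4\e^\alpha<2\e$ for every $\e$ small enough, so the balls $B_{2\e^\alpha}(x_i(\e))$ are pairwise disjoint. I would therefore define
\[
g_\e(x) := 1 - \sum_{i=1}^{N(\e)}\bigl(1-\chi_i^\e(x)\bigr),
\]
and observe that at most one summand is nonzero at any given point; hence $g_\e\in C^\infty(D)$ takes values in $[0,1]$, vanishes on each $B_{\e^\alpha}(x_i(\e))$, and equals $1$ on the complement of $\bigcup_i B_{2\e^\alpha}(x_i(\e))$, which is the obvious intended reading of the second condition in the lemma.

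For the Lebesgue bounds, the pairwise disjointness of the supports of $1-\chi_i^\e$ (respectively of $\nabla\chi_i^\e$) reduces the integrals to a sum of one-ball contributions. Using the pointwise estimates $|1-\chi_i^\e|\leq \mathbf{1}_{B_{2\e^\alpha}(x_i(\e))}$ and $|\nabla\chi_i^\e|\lesssim \e^{-\alpha}\,\mathbf{1}_{B_{2\e^\alpha}(x_i(\e))}$, together with $|B_{2\e^\alpha}|\sim \e^{3\alpha}$ and the count $N(\e)\lesssim\e^{-3}$ implied by \eqref{defKeps}, one obtains
\[
\|1-g_\e\|_{L^q(D)}^q = \sum_{i=1}^{N(\e)} \|1-\chi_i^\e\|_{L^q}^q \lesssim N(\e)\,\e^{3\alpha}\lesssim \e^{3(\alpha-1)},
\]
and
\[
\|\nabla g_\e\|_{L^q(D)}^q = \sum_{i=1}^{N(\e)}\|\nabla\chi_i^\e\|_{L^q}^q \lesssim N(\e)\,\e^{-q\alpha}\,\e^{3\alpha}\lesssim \e^{(3-q)\alpha-3},
\]
as claimed. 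There is no genuine obstacle here: the whole statement is purely geometric, and the only nontrivial input is the disjointness of the fattened balls $B_{2\e^\alpha}(x_i(\e))$, which follows at once from $\alpha>1$ combined with the separation condition \eqref{defKeps}.
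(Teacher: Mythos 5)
Your construction is correct and is precisely the standard argument the paper has in mind but does not spell out (it asserts only that the lemma is an immediate consequence of \eqref{defKeps}): radial bump profiles rescaled to each hole, combined into a single cut-off via the disjointness of the fattened balls $B_{2\e^\alpha}(x_i(\e))$, which follows from $\alpha>1$ and the $2\e$-separation, together with the count $N(\e)\lesssim\e^{-3}$. You also correctly read through the typo in the statement: the second condition should be $g_\e=1$ on $D\setminus\bigcup_{i=1}^{N(\e)} B_{2\e^\alpha}(x_i(\e))$, since otherwise it contradicts the first.
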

For $\phi\in C_c^\infty([0,T)\times D)$, we test the second equation of \eqref{NSE} by $\phi = g_\e \phi + (1-g_\e) \phi$, where $g_\e$ are as in Lemma~\ref{lem:Cutoff}. Using $P_\e = p(\rho_\e)$ on $D_\e$ and seeing that $g_\e \phi$ is a proper test function, we obtain 
\begin{align} \label{approx_mom}
\begin{split}
&\int_0^T\int_{D} \tvr_\e \tvu_\e \cdot \del_t \phi \dd x \dd t + \int_0^T\int_{D} \tvr_\e \tvu_\e\otimes \tvu_\e : \nabla \phi \dd x \dd t + \int_0^T\int_{D} P_\e\div \phi \dd x \dd t \\
&- \int_0^T\int_{D} \ms(\nabla \tvu_\e):\nabla \phi \dd x \dd t + \int_0^T\int_{D} \tvr_\e \vc f \cdot \phi \dd x \dd t +\int_{D} \tilde{\vc m}_{\e,0} \cdot \phi(0,\cdot) \dd x\\
&= \int_0^T\int_{D_\e} \rho_\e \vu_\e \cdot \del_t (g_\e \phi) \dd x \dd t + \int_0^T\int_{D_\e} \rho_\e \vu_\e\otimes \vu_\e : \nabla (g_\e \phi) \dd x \dd t + \int_0^T\int_{D_\e} p(\rho_\e) \div (g_\e \phi) \dd x \dd t \\
&- \int_0^T\int_{D_\e} \ms(\nabla \vu_\e):\nabla (g_\e \phi) \dd x \dd t + \int_0^T\int_{D_\e} \rho_\e \vc f \cdot (g_\e \phi) \dd x \dd t +\int_{D_\e} \vc m_{\e,0} \cdot (g_\e \phi(0,\cdot)) \dd x + F_\e\\
&= F_\e
\end{split}
\end{align}
with the remainder
\begin{align*}
F_\e &= \int_0^T\int_D \tvr_\e \tvu_\e \cdot \del_t ((1-g_\e)\phi) \dd x \dd t + \int_0^T \int_D \tvr_\e \tvu_\e \otimes \tvu_\e : \nabla ((1-g_\e)\phi) \dd x \dd t\\
&\quad + \int_0^T \int_D P_\e \div((1-g_\e)\phi) \dd x \dd t - \int_0^T \int_D \ms(\nabla \tvu_\e) : \nabla ((1-g_\e)\phi) \dd x \dd t\\
&\quad + \int_0^T \int_D \tvr_\e \vc f \cdot (1-g_\e) \phi \dd x \dd t + \int_D \tilde{\vc m}_{\e, 0} \cdot (1-g_\e)\phi(0,\cdot) \dd x.
\end{align*}

In order to prove Theorem~\ref{thm1}, we have to show first that $F_\e \to 0$ as $\e \to 0$. Let us start with the most crucial terms. For the pressure $P_\e$, according to \eqref{pressDec} we split $P_\e = P_{\e, 1} + P_{\e, 2}$ with
\begin{gather*}
P_{\e, 1} \ \text{uniformly bounded in} \ L^1(0,T;L^{\min\{2, \frac{3\gamma}{\gamma+3} \} }(D)),\\
P_{\e, 2} \ \text{uniformly bounded in} \ \widehat{W}^{-1,2}(0,T;L^2(D)).
\end{gather*}
Then, we estimate for $P_{\e, 1}$
\begin{align*}
&\Big| \int_0^T \int_D P_{\e, 1} \div((1-g_\e)\phi) \dd x \dd t \Big| \leq \Big|\int_0^T \int_D P_{\e, 1} (1-g_\e) \div \phi \dd x \dd t \Big| \\ & + \Big|\int_0^T \int_D P_{\e, 1} \nabla g_\e \cdot \phi \dd x \dd t \Big|\\
&\lesssim \|P_{\e, 1}\|_{L^1(0,T;L^{\min\{2, \frac{3\gamma}{\gamma+3} \} }(D))} \big( \|1-g_\e\|_{L^{\max\{2, \frac{3\gamma}{2\gamma-3} \} }(D)} + \|\nabla g_\e\|_{L^{\max\{2, \frac{3\gamma}{2\gamma-3} \} }(D)} \big) \|\phi\|_{W^{1,\infty}((0,T) \times D)}\\
&\lesssim \e^{3(\alpha-1)\min\{\frac12, \frac{2\gamma-3}{3\gamma} \} } + \e^{\big((3-\max\{2, \frac{3\gamma}{2\gamma-3} \})\alpha - 3 \big)\min\{\frac12, \frac{2\gamma-3}{3\gamma} \}} \to 0
\end{align*}
by \eqref{jedna} and assumption \eqref{cond1}.

Similarly,
\begin{align*}
&\Big| \int_0^T \int_D P_{\e, 2} \div((1-g_\e)\phi) \dd x \dd t \Big| \leq \Big|\int_0^T \int_D P_{\e, 2} (1-g_\e) \div \phi \dd x \dd t \Big| \\
&+ \Big|\int_0^T \int_D P_{\e, 2} \nabla g_\e \cdot \phi \dd x \dd t \Big|\\
&\lesssim \|P_{\e, 2}\|_{\widehat{W}^{-1,2}(0,T;L^2(D))} \big( \|1-g_\e\|_{L^2(D)} + \|\nabla g_\e\|_{L^2(D)} \big) \|\del_t \phi\|_{L^2(0,T;W^{1,\infty}(D))}\\
&\lesssim \e^{\frac{3(\alpha-1)}{2}} + \e^{\frac{\alpha-3}{2}} \to 0.
\end{align*}

Seeing that the estimates for the remaining terms of $F_\e$ are the same as the ones obtained in Section~\ref{sec32}, where every $R_\e \phi$ has to be replaced by $(1-g_\e)\phi$, and any $1 + \e^{(3-q)\alpha - 3}$ by $\e^{(3-q)\alpha - 3}$, we indeed see that $F_\e \to 0$. It remains to show that $P_\e \to \rho^\gamma$ at least in $L^1_{\rm loc}((0,T)\times D)$. In order to show this we may apply the standard procedure based on the effective viscous flux identity combined with the renormalized continuity equation. The first step is connected with the use of the test function 
\begin{align*}
\phi(t,x)= \psi(t) \varphi(x) \nabla\Delta^{-1} \tvr_\e^\beta
\end{align*}
in \eqref{approx_mom}
and 
\begin{align*}
\phi(t,x)= \psi(t) \varphi(x) \nabla\Delta^{-1}\overline{\rho^\beta}
\end{align*}
in the limit form of the momentum equation (the limit passage in the time derivative and the convective term is well known, see, e.g., \cite{Novotny2004})
\begin{align} \label{limit_mom}
\begin{split}
&\int_0^T\int_{D} \rho \vu \cdot \del_t \phi \dd x \dd t + \int_0^T\int_{D} \rho \vu\otimes \vu : \nabla \phi \dd x \dd t + \int_0^T \int_{D} \overline{\rho^\gamma} \div \phi \dd x \dd t \\
&- \int_0^T\int_{D} \ms(\nabla \vu):\nabla \phi \dd x \dd t + \int_0^T\int_{D} \rho \vc f \cdot \phi \dd x \dd t = 0. 
\end{split}
\end{align}                      
In both situations, $\psi \in C^\infty_c(0,T)$, $\varphi \in C^\infty_c(D_\e)$, and $\beta >0$ sufficiently small so that all integrals are finite. The only slight technical point is connected  with the question whether we may replace in the momentum equation the term $P_\e$ by $p(\tvr_\e)$ which leads to the question whether
\begin{align*}
\int_0^T \int_D (P_\e-\tvr_\e^\gamma)\div \phi \dd x \dd t \to 0 \qquad \text{ for } \phi \in C^\infty_c((0,T)\times D)
\end{align*}
as well as   
\begin{align*}
\int_0^T \int_D (P_\e-\tvr_\e^\gamma)\div (\psi (t)\varphi(x)\nabla\Delta^{-1} \tvr_\e^\beta) \dd x \dd t \to 0 
\end{align*}
with $\psi(t)$ and $\varphi(x)$ as above. Indeed, since $P_\e = p(\rho_\e) = \rho_\e^\gamma$ in $(0,T)\times D_\e$  and $\tvr_\e = 0$ in $D\setminus D_\e$, for the first term we have
\begin{align*}
\int_0^T \int_D (P_\e-\tvr_\e^\gamma)\div \phi \dd x \dd t = \int_0^T \int_{D\setminus D_\e} P_\e \div \phi \dd x \dd t \lesssim \|P_\e\|_{L^{\frac53 - \frac1\gamma}((0,T) \times D)} |D\setminus D_\e|^\frac{2\gamma-3}{5\gamma-3} \to 0.
\end{align*}
The second question is more difficult. However, again by $P_\e = \rho_\e^\gamma$ in $(0,T)\times D_\e$, we have
\begin{align*}
&\int_0^T \int_D (P_\e-\tvr_\e^\gamma)\div (\psi(t) \varphi(x)\nabla\Delta^{-1} \tvr_\e^\beta) \dd x \dd t \\
&= \int_0^T \int_{D\setminus D_\e} P_\e\psi(t) \varphi(x) \tvr_\e^\beta \dd x \dd t + \int_0^T \int_{D\setminus D_\e} P_\e \psi(t) \nabla \varphi(x)\cdot \nabla\Delta^{-1} \tvr_\e^\beta\dd x \dd t = J_1 + J_2 
\end{align*}
for $\beta$ positive, sufficiently small. Since $\tvr_\e = 0$ in $D\setminus D_\e$, the first term is zero. In the second term we take $\beta$ so small that $\|\nabla\Delta^{-1} \tvr_\e^\beta\|_{L^\infty((0,T)\times D)} \lesssim 1$ and then 
\begin{align*}
|J_2| \lesssim \|P_\e\|_{L^{\frac53 - \frac1\gamma}((0,T) \times D)} |D\setminus D_\e|^\frac{2\gamma-3}{5\gamma-3}  \to 0
\end{align*}
for $\e \to 0$. Realizing this fact, we may proceed as in the standard existence proof and send $\e \to 0$  to achieve the effective viscous flux identity
\begin{align*} 
\overline{\rho^{\gamma + \beta}} -\Big(\frac 43 \mu + \eta\Big) \overline{\rho^\beta \div \vu} = \overline{\rho^{\gamma}} \, \overline{\rho^\beta} - \Big(\frac 43 \mu + \eta\Big) \overline{\rho^\beta} \div \vu \qquad \text{ a.e. in } (0,T)\times D.
\end{align*}
This identity, combined with the renormalized form of the continuity equation, yields by standard technique the strong convergence of the density which concludes the proof of Theorem~\ref{thm1}.

\section{Heat conducting fluids}\label{sec:NSF}
In this section, we briefly investigate the homogenization for the case of the full Navier-Stokes-Fourier system, given by
\begin{align}\label{NSF}
\begin{cases}
\del_t \rho_\e + \div(\rho_\e \vu_\e)=0 & \text{in } (0,T)\times D_\e,\\
\del_t(\rho_\e \vu_\e) + \div(\rho_\e \vu_\e \otimes \vu_\e) + \nabla p(\rho_\e, \vartheta_\e) = \div \ms(\vartheta_\e, \nabla \vu_\e) + \rho_\e \vc f & \text{in } (0,T)\times D_\e,\\
\del_t(\rho_\e s(\rho_\e, \vartheta_\e)) + \div(\rho_\e s(\rho_\e, \vartheta_\e) \vu_\e) + \div \frac{\vc q_\e}{\vartheta_\e} = \sigma_\e & \text{in } (0,T)\times D_\e,\\
\vu_\e=0, \ \vc q_\e \cdot \vc n = 0 & \text{on } (0,T)\times \del D_\e,\\
\rho_\e(0,\cdot)=\rho_{\e, 0},\ (\rho_\e\vu_\e)(0,\cdot)=\vc m_{\e, 0},\ \vartheta_\e(0,\cdot) = \vartheta_{\e, 0} & \text{in } D_\e.
\end{cases}
\end{align}
Here, $\rho_\e$, $\vartheta_\e$, and $\vu_\e$ denote the fluid's density, temperature, and velocity, respectively, $p(\rho,\vartheta)=\rho^\gamma + \rho \vartheta + \vartheta^4$ for some $\gamma>\frac32$, $\ms(\vartheta, \nabla \vu)$ is the Newtonian viscous stress tensor of the form
\begin{align}\label{S}
\ms(\vartheta, \nabla\vu)=\mu(\vartheta) \Big(\nabla\vu + \nabla\vu^T-\frac23 \div(\vu)\mathbb{I} \Big)+\eta(\vartheta) \div(\vu)\mathbb{I},
\end{align}
and $\vc f\in L^\infty((0,T)\times D)$ is given. The {\cre specific} entropy corresponding to the given form of the pressure is
\begin{align*}
s(\rho,\vartheta) = \ln\vartheta - \ln\varrho + \frac{4 \vartheta^3}{\varrho}.
\end{align*}
Moreover, the heat flux vector is given by Fourier's law
\begin{align*}
\vc q_\e = - \kappa(\vartheta_\e) \nabla \vartheta_\e,
\end{align*}
and we assume the transport coefficients $\mu, \eta, \kappa$ to be continuously differentiable functions on $[0,\infty)$ with
\begin{align}\label{trkoeff}
\begin{split}
1+\vartheta &\lesssim \mu(\vartheta), \ |\mu'|\lesssim 1,\\
0 &\leq \eta(\vartheta) \lesssim 1+\vartheta,\\
1+\vartheta^m &\lesssim \kappa(\vartheta) \lesssim 1+ \vartheta^m \ \text{for some} \ m>2.
\end{split}
\end{align}

The measure $\sigma_\e$ is called entropy production rate and is assumed to satisfy
\begin{align*}
\sigma_\e \geq \frac{1}{\vartheta_\e} \Big( \ms(\vartheta_\e, \nabla \vu_\e) - \frac{\vc q_\e \cdot \nabla \vartheta_\e}{\vartheta_\e} \Big) \ \text{in the sense of measures}.
\end{align*}

For existence of weak solutions and further details about the homogenization of system \eqref{NSF}, we refer to \cite{PokornySkrisovski2021}. To shorten the computations, we will just focus on how one gets their assumption on the adiabatic exponent from $\gamma>6$ to $\gamma>3$.

Since most of the estimates to do are similar to the ones obtained for the Navier-Stokes system \eqref{NSE}, we will just state the weak form of the momentum equation, which essentially stays as in the case of constant temperature:
\begin{align}\label{wkMomTemp}
\begin{split}
&\int_0^T\int_{D_\e} \rho \vu \cdot \del_t \phi \dd x \dd t + \int_0^T\int_{D_\e} \rho \vu\otimes \vu : \nabla \phi \dd x \dd t + \int_0^T\int_{D_\e} p(\rho,\vartheta) \div \phi \dd x \dd t \\
&- \int_0^T\int_{D_\e} \ms(\vartheta, \nabla \vu):\nabla \phi \dd x \dd t + \int_0^T\int_{D_\e} \rho \vc f \cdot \phi \dd x \dd t = -\int_{D_\e} \vc m_0 \cdot \phi(0,\cdot) \dd x.
\end{split}
\end{align}

To get a similar pressure decomposition as in Section~\ref{sec32}, we define again a pressure extension $P_\e$ by
\begin{align*}
\int_0^T \langle \nabla P_\e, \phi\rangle_{\Test', \Test(D)} \dd t = \int_0^T \langle \nabla p(\rho_\e, \vartheta_\e), R_\e \phi \rangle_{\Test', \Test(D_\e)} \dd t \quad \text{for any } \phi \in \Test((0,T) \times D),
\end{align*}
use equation \eqref{wkMomTemp}, and focus on the only new term involving $\ms(\vartheta, \nabla \vu)$. From \cite[Remark after Proposition~2.4]{PokornySkrisovski2021}, we have the uniform temperature estimate
\begin{align*}
\| \vartheta_\e \|_{L^m(0,T;L^{3m}(D_\e))} \lesssim 1.
\end{align*}
Recalling $R_\e$ as the restriction operator from Theorem~\ref{thm:Restr}, the form of the stress tensor $\ms$ in \eqref{S}, and the assumptions on the transport coefficients \eqref{trkoeff}, we get
\begin{align*}
&\bigg| \int_0^T \int_{D_\e} \ms(\vartheta_\e, \nabla \vu_\e) : \nabla R_\e \phi \dd x \dd t \bigg|\\
&\lesssim \|\vartheta_\e\|_{L^m(0,T;L^{3m}(D_\e))} \|\nabla \vu_\e\|_{L^2(0,T;L^2(D_\e))} \|\nabla R_\e\phi\|_{L^\frac{2m}{m-2}(0,T;L^\frac{6m}{3m-2}(D_\e))}\\
&\lesssim \Big(1+ \e^{(3-\frac{6m}{3m-2})\alpha - 3} \Big)^\frac{3m-2}{6m} \|\nabla \phi\|_{L^\frac{2m}{m-2}(0,T;L^\frac{6m}{3m-2}(D))},
\end{align*}
which is uniform as long as
\begin{align*}
m>2 \ \text{and} \ \alpha>\frac{3}{3-\frac{6m}{3m-2}} = \frac{3m-2}{m-2}.
\end{align*}
Note that this is the same restriction as in \cite{LuPokorny2021, PokornySkrisovski2021}. The remaining estimates are the same as in Section~\ref{sec32}, leading to a pressure decomposition of the form
\begin{equation*}
P_\e \ \text{uniformly bounded in} \ L^1(0,T;L^{\min\{2, \frac{3\gamma}{\gamma+3}\}}(D)) + \widehat{W}^{-1,2}(0,T;L^2(D)) + L^\frac{2m}{m+2}(0,T;L^\frac{6m}{3m+2}(D)).
\end{equation*}
Note especially that $m>2$ leads to $\frac{6m}{3m+2}>\frac32$, in accordance with Remark~\ref{rem1}.

Repeating now the steps done in \cite[Sections~2 and 3]{PokornySkrisovski2021}, we have shown:
\begin{theorem}\label{thm2}
Let $D\subset\R^3$ be a bounded domain with smooth boundary, $K_\e \subset D$ comply with \eqref{defKeps}, and $D_\e$ be defined as in \eqref{defDeps}. Let $(\rho_\e, \vartheta_\e, \vu_\e)$ be a sequence of renormalized finite energy weak solutions to system \eqref{NSF} emanating from the initial data $(\rho_{\e, 0}, \vartheta_{\e, 0}, \vc m_{\e, 0})$ satisfying the assumptions given in \cite[Theorem~1.2]{PokornySkrisovski2021}. Then, there exists a subsequence (not relabelled) such that
\begin{align*}
\tilde{\rho}_\e &\weak^* \rho \text{ weakly-$\ast$ in } L^\infty(0,T;L^\gamma(D)) \ \text{and weakly in} \ L^{\frac53 \gamma - 1}((0,T) \times D),\\
\tilde{\vu}_\e &\weak \vu \text{ weakly in } L^2(0,T;W_0^{1,2}(D)),\\
\tilde{\vartheta}_\e &\weak \vartheta \text{ weakly in } L^m(0,T;L^{3m}(D))\cap L^2(0,T;W^{1,2}(D)) \ \text{and weakly-$*$ in} \ L^\infty(0,T;L^4(D)),
\end{align*}
where $(\rho,\vartheta,\vu)$ is a renormalized weak solution to system \eqref{NSF} in the domain $(0,T)\times D$ with initial conditions $\rho(0,\cdot)=\rho_0$, $\vartheta(0,\cdot)=\vartheta_0$, and $(\rho\vu)(0,\cdot)=\vc m_0$, provided
\begin{align*}
\gamma>3,\ m>2, \ \text{and} \ \alpha>\max \Big\{\frac{2\gamma-3}{\gamma-3}, \frac{3m-2}{m-2} \Big\}. 
\end{align*}
\end{theorem}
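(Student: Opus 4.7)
The plan is to follow the proof of Theorem~\ref{thm1} almost verbatim, but to thread through two additional ingredients forced by the temperature: the passage to the limit in the entropy equation (which can be imported from \cite{PokornySkrisovski2021} without change once tightness of $\tilde{\vartheta}_\e$ is available) and the new pressure contribution coming from the temperature-dependent viscous term $\ms(\vartheta_\e,\nabla\vu_\e)$. Uniform bounds on $(\tvr_\e,\tilde{\vartheta}_\e,\tvu_\e)$ produced by the energy--entropy analysis, in particular $\|\vartheta_\e\|_{L^m(0,T;L^{3m}(D_\e))}\lesssim 1$, are taken from \cite[Sections~2--3]{PokornySkrisovski2021}, and the three-piece decomposition
\begin{align*}
P_\e = P_{\e,1} + P_{\e,2} + P_{\e,3}
\end{align*}
with $P_{\e,1}\in L^1(0,T;L^{\min\{2,3\gamma/(\gamma+3)\}}(D))$, $P_{\e,2}\in\widehat{W}^{-1,2}(0,T;L^2(D))$, and $P_{\e,3}\in L^{2m/(m+2)}(0,T;L^{6m/(3m+2)}(D))$ is the one already built in Section~\ref{sec:NSF}.

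For the limit in the momentum equation I would reuse the cutoff mechanism of Section~\ref{sec:Conv}: split any $\phi\in C_c^\infty([0,T)\times D)$ as $g_\e\phi + (1-g_\e)\phi$ with $g_\e$ from Lemma~\ref{lem:Cutoff}, test the weak formulation \eqref{wkMomTemp} on $D_\e$ with $g_\e\phi$ to recover the weak formulation on $D$, and show that the remainder $F_\e$ collected from $(1-g_\e)\phi$ tends to zero. All the non-pressure contributions as well as the parts generated by $P_{\e,1}$ and $P_{\e,2}$ are handled word for word as in Section~\ref{sec:Conv} (every $R_\e\phi$ replaced by $(1-g_\e)\phi$, every Bogovski\u{\i} factor $1+\e^{(3-q)\alpha-3}$ replaced by the sharper $\|1-g_\e\|_{L^q}+\|\nabla g_\e\|_{L^q}$ from Lemma~\ref{lem:Cutoff}). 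The genuinely new term is the one driven by $P_{\e,3}$: duality with H\"older exponents $\frac{2m}{m-2}$ in time and $\frac{6m}{3m-2}$ in space (the conjugate of $\frac{6m}{3m+2}$) leads to the smallness conditions
\begin{align*}
3-\frac{6m}{3m-2}>0, \qquad \Bigl(3-\frac{6m}{3m-2}\Bigr)\alpha - 3 > 0,
\end{align*}
which are precisely $m>2$ and $\alpha>\frac{3m-2}{m-2}$, matching the hypothesis of the theorem.

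Once $F_\e\to 0$ is established, the limit passage in the continuity and renormalized continuity equations is carried out as in \eqref{AL} via \cite[Lemma~5.1]{Lions1998} and an Aubin--Lions argument, and the limit in the entropy equation is recovered as in \cite[Section~3]{PokornySkrisovski2021} using the $L^2(0,T;W^{1,2}(D))$ and $L^\infty(0,T;L^4(D))$ temperature estimates together with the Div--Curl lemma. Strong convergence of the density follows from the effective viscous flux identity via the standard test function $\psi(t)\varphi(x)\nabla\Delta^{-1}\tvr_\e^\beta$ for small $\beta>0$ and $\varphi\in C_c^\infty(D_\e)$, exactly as at the end of Section~\ref{sec:Conv}; the discrepancy between $P_\e$ and $p(\tvr_\e,\tilde{\vartheta}_\e)$ on the small set $D\setminus D_\e$ is again negligible because $|D\setminus D_\e|\lesssim\e^{3(\alpha-1)}$ and the extension rule of Remark~\ref{rem2} applies to the $\rho^\gamma$-part, while the $\rho\vartheta+\vartheta^4$ summands inherit the temperature bounds. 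The single real obstacle is therefore the bookkeeping for $P_{\e,3}$ and the verification that its H\"older pairing with $(1-g_\e)\phi$ and $\nabla g_\e\cdot\phi$ yields precisely the threshold $\alpha>(3m-2)/(m-2)$; everything else is a combination of the arguments of Section~\ref{sec:Conv} and \cite{LuPokorny2021, PokornySkrisovski2021}.
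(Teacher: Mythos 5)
Your proposal is correct and follows essentially the same route as the paper. The paper's Section~\ref{sec:NSF} is deliberately terse: it only carries out the one genuinely new estimate, namely the pairing of $\ms(\vartheta_\e,\nabla\vu_\e)$ with $\nabla R_\e\phi$ via the temperature bound $\|\vartheta_\e\|_{L^m(0,T;L^{3m}(D_\e))}\lesssim 1$, obtains the extra pressure piece $P_{\e,3}$ bounded in $L^{2m/(m+2)}(0,T;L^{6m/(3m+2)}(D))$, and then refers the remaining work (cut-off argument, limit in the entropy balance, effective viscous flux, strong convergence of the density) to Sections~\ref{sec:Conv} and to \cite{PokornySkrisovski2021}. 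You identify exactly the same third piece, arrive at the same threshold $\alpha>(3m-2)/(m-2)$, $m>2$ from the H\"older duality of $P_{\e,3}$ against $\div((1-g_\e)\phi)$, and delegate the entropy limit and the flux identity to \cite{PokornySkrisovski2021} — which is precisely what the paper does, only you spell out the $g_\e$--mechanics for $P_{\e,3}$ where the paper leaves them implicit.
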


\begin{rem}
Often, due to physics, the value of $m$ is taken to be equal to $3$ which is connected with the term $\vartheta^4$ in the pressure (the power is connected with the so-called Stefan--Boltzmann law of radiation, see \cite[Section 2.2.3]{FeireislNovotny2009singlim}; the growth in the molecular part of the heat conductivity is usually assumed to be slower). Then, the restriction on $\alpha$ reduces to 
\begin{align*}
\alpha>\max \Big\{ \frac{2\gamma-3}{\gamma-3}, 7 \Big\},
\end{align*}
i.e., for $\gamma > \frac {18}5$ the bound on $\alpha$ is independent of $\gamma$.
\end{rem}

\section{A different approach via special cut-off functions}\label{sec:Bravin}
In this section, we want to give another proof of Theorem~\ref{thm1}. To this end, we will not take advantage of the pressure decomposition in Section~\ref{sec32} but rather use special cut-off functions that differ from those in Lemma~\ref{lem:Cutoff}. To define an appropriate cut-off function for multiple holes in the whole of $D$, we follow an idea of Bravin in \cite{Bravin2022} for a single hole in $\R^2$, a generalization of which was already used in \cite{NecasovaOschmann2023}. {\cre For $0<\eta < R$, we set}
\begin{align}\label{zeta3}
\zeta_{\eta,R}(r)=\begin{cases}
1 & \text{if } 0\leq r<\eta,\\
\frac{1/R-1/r}{1/R-1/\eta} & \text{if } \eta\leq r<R,\\
0 & \text{else},
\end{cases}
\end{align}
and define
\begin{align*}
\cre \mathfrak{y}_\e^0(r)=\zeta_{\e^\alpha,\e^{1+\delta}}(r)
\end{align*}
for some $\delta>0$ such that $2\e^\alpha<\e^{1+\delta}$. With {\cre$r=|z|$}, an easy calculation leads to
\begin{align}\label{est0}
\begin{split}
\|\mathfrak{y}_\e^0\|_{L^\infty(\R^3)}+\|z_i \nabla\mathfrak{y}_\e^0\|_{L^\infty(\R^3)} &\lesssim 1,\\
\|\nabla \mathfrak{y}_\e^0\|_{L^q(\R^3)}^q + \|z_i \nabla^2\mathfrak{y}_\e^0\|_{L^q(\R^3)}^q &\lesssim
\e^{(3-q)\alpha} \begin{cases}
|\e^{(3-2q)(1+\delta-\alpha)} - 1| & \text{if } q\neq \frac32,\\
|\log(\e^{1+\delta-\alpha})| & \text{if } q=\frac32,
\end{cases}
\end{split}
\end{align}
{\cre where $z_i$ denotes the $i$-th component of a coordinate $z \in \R^3$.} We set $\mathfrak{y}_\e^i(x)=\mathfrak{y}_\e^0(|x-x_i(\e)|)$, and define the matrix-valued cut-off function
\begin{align*}
\Phi_\e = \mathbb{I} - \sum_{i=1}^{N(\e)} \Big(\mathfrak{y}_\e^i\mathbb{I} + \nabla \mathfrak{y}_\e^i \wedge \mathbf{T} [\, \cdot - x_i(\e)]\Big).
\end{align*}
Here,
\begin{align*}
\mathbf{T} [x]=\frac12 \begin{pmatrix}
0 & x_3 & -x_2\\
-x_3 & 0 & x_1\\
x_2 & -x_1 & 0
\end{pmatrix},
\end{align*}
and the cross-product has to be understood in the sense of tensors, that is,
\begin{align*}
(a\wedge \mathbf{T})\cdot b = a\wedge (\mathbf{T} \cdot b) \quad \forall a,b\in\R^3
\end{align*}
with
\begin{align*}
a \wedge b = (a_2b_3-a_3b_2, a_3 b_1-a_1b_3, a_1b_2-a_2b_3)
\end{align*}
for $a,b \in \R^3$, and $(\mathbf{T} \cdot b)_i = \sum_{j=1}^3 \mathbf{T}_{ij}b_j$. 
We remark that the matrix $\mathbf{T}$ is chosen such that $\nabla \wedge \mathbf{T} = \mathbb{I}$.
Note especially that
\begin{align*}
\div \Phi_\e = - \sum_{i=1}^{N(\e)} \div \big(\nabla\wedge \big[\mathfrak{y}_\e^i \mathbf{T} [x-x_i(\e)] \big]\big)=0.
\end{align*}
We summarize the properties of $\Phi_\e$ in the following

\begin{lemma}\label{lemPhi}
The function $\Phi_\e$ fulfils
\begin{align*}
\Phi_\e &\in W^{1,q}(D)\cap L^\infty(D) \text{ for any } q\geq 1,\\
\Phi_\e &=0 \text{ on } D\setminus D_\e,\\
\Phi_\e &=\mathbb{I} \text{ on } D\setminus \bigcup_{i=1}^{N(\e)} B_{\e^{1+\delta}}(x_i(\e)).
\end{align*}
Moreover, $\|\Phi_\e\|_{L^\infty(D)}\lesssim 1$, and for any $1\leq q< \infty$,
\begin{align*}
\|\Phi_\e-\mathbb{I}\|_{L^q(D)}^q &\lesssim \e^{3\delta}, &&\|\nabla\Phi_\e\|_{L^q(D)}^q \lesssim \e^{(3-q)\alpha - 3} \begin{cases}
|\e^{(3-2q)(1+\delta-\alpha)} - 1| & \text{if } q\neq \frac32,\\
|\log(\e^{1+\delta-\alpha})| & \text{if } q=\frac32.
\end{cases}
\end{align*}
In turn, for any $\psi\in C_c^\infty(D;\R^3)$ and any $q>2$,
\begin{align*}
\|\nabla (\Phi_\e \psi)-\Phi_\e \nabla\psi\|_{L^q(D)}^q &\lesssim \e^{(3-q)\alpha - 3} \|\psi\|_{L^\infty(D)}^q.
\end{align*}
\end{lemma}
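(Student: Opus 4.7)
The plan is to derive each assertion by direct computation, leveraging three ingredients: first, by \eqref{defKeps} and $\e^{1+\delta}<\e$, the supports $B_{\e^{1+\delta}}(x_i(\e))$ of the bumps $\mathfrak{y}_\e^i$ are pairwise disjoint, so the sum defining $\Phi_\e$ has at most one nonzero term at each point; second, on each $B_{\e^\alpha}(x_i(\e))$ one has $\mathfrak{y}_\e^i\equiv 1$ and $\nabla\mathfrak{y}_\e^i\equiv 0$, so the $i$-th summand reduces to $\mathbb{I}$ and $\Phi_\e$ vanishes --- this is exactly why the correction $\nabla\mathfrak{y}_\e^i\wedge\mathbf{T}[\,\cdot-x_i(\e)]$ is included, since $\nabla\wedge\mathbf{T}=\mathbb{I}$; third, the pointwise and $L^q$ bounds from \eqref{est0} applied after translation.

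From these observations the pointwise identities are immediate: $\Phi_\e=0$ on $K_\e=D\setminus D_\e$ (since $K_\e\subset\bigcup_i B_{\e^\alpha}(x_i(\e))$), $\Phi_\e=\mathbb{I}$ outside $\bigcup_i B_{\e^{1+\delta}}(x_i(\e))$, and the regularity $\Phi_\e\in W^{1,q}(D)\cap L^\infty(D)$ follows from the smoothness of $\zeta_{\e^\alpha,\e^{1+\delta}}$ on each annulus together with matching on the interfaces. The $L^\infty$ bound then reduces to $\|\mathfrak{y}_\e^0\|_{L^\infty}+\|z_i\nabla\mathfrak{y}_\e^0\|_{L^\infty}\lesssim 1$ from \eqref{est0}, noting $|x-x_i(\e)|\leq \e^{1+\delta}$ on the relevant support. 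For $\|\Phi_\e-\mathbb{I}\|_{L^q}^q$, the integrand is uniformly bounded and the support has measure $\lesssim N(\e)\e^{3(1+\delta)}\lesssim \e^{3\delta}$. For $\nabla\Phi_\e$ I would differentiate termwise,
\begin{align*}
\nabla\Phi_\e = -\sum_{i=1}^{N(\e)}\Big(\nabla\mathfrak{y}_\e^i\otimes\mathbb{I}+\nabla^2\mathfrak{y}_\e^i\wedge\mathbf{T}[x-x_i(\e)]+\nabla\mathfrak{y}_\e^i\wedge\mathbf{T}\Big),
\end{align*}
where the last term arises from differentiating the linear factor $\mathbf{T}[x-x_i(\e)]$, whose Jacobian is the constant matrix $\mathbf{T}$. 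Each contribution is then pointwise controlled by $|\nabla\mathfrak{y}_\e^0|+|z_i\nabla^2\mathfrak{y}_\e^0|$ at the translated argument, so applying \eqref{est0} on each disjoint ball and summing $N(\e)\lesssim \e^{-3}$ copies produces the stated factor $\e^{(3-q)\alpha-3}$ times the bracket $\{\cdot\}$.

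For the commutator estimate the Leibniz rule gives $\nabla(\Phi_\e\psi)-\Phi_\e\nabla\psi=(\nabla\Phi_\e)\psi$, and hence
\begin{align*}
\|\nabla(\Phi_\e\psi)-\Phi_\e\nabla\psi\|_{L^q(D)}^q\lesssim \|\psi\|_{L^\infty(D)}^q \|\nabla\Phi_\e\|_{L^q(D)}^q.
\end{align*}
The bracket present in the bound for $\nabla\Phi_\e$ collapses here because, for $q>2$, $3-2q<0$, while $2\e^\alpha<\e^{1+\delta}$ forces $\alpha>1+\delta$, so $1+\delta-\alpha<0$; the product $(3-2q)(1+\delta-\alpha)$ is therefore strictly positive, $\e^{(3-2q)(1+\delta-\alpha)}\to 0$, and $|\e^{(3-2q)(1+\delta-\alpha)}-1|\leq 1$ uniformly in $\e$. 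The only genuine bookkeeping obstacle is the gradient computation above: one has to differentiate $\nabla\mathfrak{y}_\e^i\wedge\mathbf{T}[\,\cdot-x_i(\e)]$ carefully enough so that every resulting term either carries an extra factor $|x-x_i(\e)|$ (to be paired with $\nabla^2\mathfrak{y}_\e^0$ via the second estimate in \eqref{est0}) or is of order $|\nabla\mathfrak{y}_\e^0|$. Once this structural matching is in place, the summation over $i$ and the scaling in $\e$ are routine.
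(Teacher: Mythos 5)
Your proof is correct and follows the same route as the paper's (very terse) argument: disjointness of the bumps plus $N(\e)\lesssim\e^{-3}$ reduces every $L^q$ estimate to the single-bump bounds \eqref{est0}, and the commutator estimate is just H\"older/Leibniz combined with boundedness of the bracket for $q>2$. Your parenthetical claim that the correction term $\nabla\mathfrak{y}_\e^i\wedge\mathbf{T}[\cdot-x_i(\e)]$ is included so that $\Phi_\e$ vanishes on the holes is slightly misleading (it vanishes there simply because $\nabla\mathfrak{y}_\e^i=0$ on $B_{\e^\alpha}$; the correction is there to make $\div\Phi_\e=0$), but this aside does not affect the validity of the argument.
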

\begin{proof}
Once noticing that the holes are disjoint and their number in $D$ grows like $\e^{-3}$, we immediately get the desired estimates from \eqref{est0}. The estimate on $\nabla(\Phi_\e \psi)-\Phi_\e \nabla\psi$ is a direct consequence of H\"older's inequality
\begin{align*}
\|\nabla (\Phi_\e \psi)-\Phi_\e \nabla\psi\|_{L^q(D)}^q &= \|(\nabla (\Phi_\e \vc e_1) \psi , \nabla (\Phi_\e \vc e_2) \psi, \nabla (\Phi_\e \vc e_3) \psi)\|_{L^q(D)}^q \\
&\leq \|\nabla\Phi_\e\|_{L^q(D)}^q \|\psi\|_{L^\infty(D)}^q
\lesssim \e^{(3-q)\alpha - 3} \|\psi\|_{L^\infty(D)}^q.
\end{align*}
\end{proof}

{\cre
\begin{rem}
As one can see, the function $\zeta_{\eta,R}$ defined in \eqref{zeta3} is not differentiable, especially not smooth, and so isn't the matrix $\Phi_\e$. This issue can be resolved in introducing another cut-off function that has a smoothing effect; since this procedure is merely technical, we will not go into details and refer to \cite[Appendix C]{Bravin2022}.
\end{rem}
}

\subsection{Uniform bounds}
The uniform bounds are exactly the same as in Section~\ref{sec:Bds} since only the Bogovski\u{\i} operator is involved, no cut-off.

\subsection{Convergence proof}
As mentioned in Section~\ref{sec:Conv}, we have that
the extended functions $\tilde{\rho}_\e$ and $\tilde{\vc u}_\e$ fulfil
\begin{align*}
\del_t \tilde{\rho}_\e + \div(\tilde{\rho}_\e \tilde{\vc u}_\e)=0 \text{ in } \mathcal{D}'((0,T)\times \R^3),
\end{align*}
as well as, after the limit passage,
\begin{align*}
\del_t \rho + \div(\rho \vc u)=0 \text{ in } \mathcal{D}'((0,T)\times \R^3),
\end{align*}
together with the corresponding renormalized forms. 
Moreover, by the arguments from Section~\ref{sec:Conv} and the strong convergence of $\Phi_\e$ from Lemma~\ref{lemPhi},
\begin{align}\label{eq:CV}
\begin{split}
\Phi_\e^T \tilde{\rho}_\e \tilde{\vc u}_\e &\to \rho \vc u \text{ in } C(0,T;L_{\rm weak}^r(D)), \quad r< \frac{2\gamma}{\gamma+1},\\
\Phi_\e^T \tilde{\rho}_\e \tilde{\vc u}_\e \otimes \tilde{\vc u}_\e &\to \rho \vc u \otimes \vc u \text{ in } \mathcal{D}'((0,T)\times D).
\end{split}
\end{align}

For $\phi \in C_c^\infty([0,T)\times D)$, we use $\Phi_\e \phi \in C_c^\infty([0,T) \times D_\e)$ as a proper test function. Since $\Phi_\e = 0$ on the holes, we can prolong all functions by zero to the whole of $D$ and obtain
\begin{align*}
0&=\int_D \tilde{\vc m}_{\e0}\cdot  \Phi_\e \phi(0,\cdot) \dd x + \int_0^T\int_D \tilde{\rho}_\e\tilde{\vc u}_\e \cdot \Phi_\e \del_t\phi \dd x \dd t + \int_0^T\int_D \tilde{\rho}_\e \tilde{\vc u}_\e \otimes \tilde{\vc u}_\e : \nabla (\Phi_\e \phi) \dd x \dd t\\
&\quad + \int_0^T\int_D \tilde{\rho}_\e^\gamma \div(\Phi_\e \phi) \dd x \dd t - \int_0^T \int_D \mathbb{S}(\nabla\tilde{\vc u}_\e):\nabla(\Phi_\e \phi) \dd x \dd t + \int_0^T\int_D \tilde{\rho}_\e \vc f\cdot\Phi_\e \phi \dd x \dd t\\
&= \sum_{j=1}^6 I_j.
\end{align*}

By the strong convergence of $\Phi_\e \to \mathbb{I}$ in any $L^q(D)$ and the convergences obtained in \eqref{eq:CV}, we can easily pass to the limit in $I_1, I_2,$ and $I_6$. Furthermore, $\div \Phi_\e = 0$, so the pressure integral reads
\begin{align*}
\int_0^T \int_D \tilde{\rho}_\e^\gamma \div (\Phi_\e \phi) \dd x \dd t = \int_0^T \int_D \tilde{\rho}_\e^\gamma \Phi_\e:\nabla \phi \dd x \dd t,
\end{align*}
and by the same argument, this converges to $\int_0^T \int_D \overline{\rho^\gamma}\div \phi \dd x \dd t$.\\

For the diffusive part $I_5$, we have
\begin{align*}
\int_0^T\int_D \mathbb{S}(\nabla\tilde{\vc u}_\e):\nabla(\Phi_\e\phi) \dd x \dd t &= \int_0^T\int_D \mathbb{S}(\nabla\tilde{\vc u}_\e):(\Phi_\e \nabla \phi) \dd x \dd t \\
&\quad + \int_0^T\int_D \mathbb{S}(\nabla\tilde{\vc u}_\e):(\nabla(\Phi_\e \phi)-\Phi_\e\nabla\phi) \dd x \dd t.
\end{align*}
The latter term converges to zero due to
\begin{align*}
\bigg|\int_0^T\int_D \mathbb{S}(\nabla \tilde{\vc u}_\e) : (\nabla(\Phi_\e \phi)-\Phi_\e\nabla\phi) \dd x \dd t \bigg| &\lesssim \|\nabla \tilde{\vc u}_\e\|_{L^2(0,T;L^2(D))} \|\nabla(\Phi_\e \phi)-\Phi_\e\nabla\phi\|_{L^\infty(0,T;L^2(D))}\\
& \lesssim \|\nabla \Phi_\e\|_{L^2(D)} \|\phi\|_{L^\infty((0,T)\times D)} \lesssim \e^\frac{\alpha-3}{2}\|\phi\|_{L^\infty((0,T)\times D)}.
\end{align*}
Together with the strong convergence of $\Phi_\e \to \mathbb{I}$ in $L^2(D)$ and the weak convergence of $\nabla\tilde{\vc u}_\e \weak \nabla\vc u$ in $L^2((0,T)\times D)$, we deduce
\begin{align*}
\int_0^T\int_D \mathbb{S}(\nabla\tilde{\vc u}_\e):\nabla(\Phi_\e\phi) \to \int_0^T\int_D \mathbb{S}(\nabla\vc u):\nabla\phi.
\end{align*}

The remaining convective part $I_3$ is handled similarly as
\begin{align*}
\int_0^T\int_D \tilde{\rho}_\e \tilde{\vc u}_\e \otimes \tilde{\vc u}_\e : \nabla (\Phi_\e \phi) \dd x \dd t &= \int_0^T\int_D \tilde{\rho}_\e \tilde{\vc u}_\e \otimes \tilde{\vc u}_\e : (\Phi_\e \nabla \phi) \dd x \dd t\\
&\quad  + \int_0^T\int_D \tilde{\rho}_\e \tilde{\vc u}_\e \otimes \tilde{\vc u}_\e : (\nabla(\Phi_\e\phi)-\Phi_\e\nabla\phi) \dd x \dd t\\
&= \int_0^T\int_D \Phi_\e^T \tilde{\rho}_\e \tilde{\vc u}_\e \otimes \tilde{\vc u}_\e : \nabla \phi \dd x \dd t\\
&\quad  + \int_0^T\int_D \tilde{\rho}_\e \tilde{\vc u}_\e \otimes \tilde{\vc u}_\e : (\nabla(\Phi_\e\phi)-\Phi_\e\nabla\phi) \dd x \dd t.
\end{align*}
The latter term vanishes due to the embedding $W_0^{1,2}(D)\subset L^6(D)$. Indeed, we get with $\gamma>3$ and the uniform bounds on $\rho_\e$ and $\vu_\e$
\begin{align*}
&\bigg| \int_0^T\int_D \tilde{\rho}_\e\tilde{\vc u}_\e\otimes \tilde{\vc u}_\e : (\nabla(\Phi_\e\phi)-\Phi_\e\nabla\phi) \bigg|\\
&\leq \|\tilde{\rho}_\e\|_{L^\infty(0,T;L^\gamma(D))} \|\tilde{\vc u}_\e\|_{L^2(0,T;L^6(D))}^2 \|\nabla(\Phi_\e\phi)-\Phi_\e\nabla\phi\|_{L^\infty(0,T;L^\frac{3\gamma}{2\gamma-3}(D))}\\
&\lesssim \e^{\big(3-\frac{3\gamma}{2\gamma-3}\big)\alpha-3} \|\phi\|_{L^\infty((0,T)\times D)},
\end{align*}
where as before $(3-\frac{3\gamma}{2\gamma-3})\alpha-3 = \alpha\frac{3(\gamma-3)}{2\gamma-3} - 3 > 0$ by assumption \eqref{cond1}. Hence, by $\Phi_\e^T \tilde{\rho}_\e \tilde{\vc u}_\e \otimes \tilde{\vc u}_\e \to \rho \vc u \otimes \vc u$ in $\mathcal{D}'((0,T)\times D)$, we obtain
\begin{align*}
\int_0^T\int_D \tilde{\rho}_\e \tilde{\vc u}_\e \otimes \tilde{\vc u}_\e : \nabla (\Phi_\e \phi) \dd x \dd t \to \int_0^T\int_D \rho\vc u\otimes\vc u : \nabla \phi \dd x \dd t.
\end{align*}
This finishes the alternative proof of Theorem~\ref{thm1}.

\section{Concluding remarks: dimensional optimality}\label{sec:Concl}
In this paper, we have shown that solutions $(\rho_\e, \vu_\e)$ of system \eqref{NSE} converge to solutions $(\rho, \vu)$ of the same system in $(0,T) \times D \subset (0,T) \times \R^d$, $d=3$, as long as the holes are in a certain sense small, and the adiabatic exponent $\gamma>3=d$. In \cite{NecasovaOschmann2023}, a similar result was shown for the two-dimensional case as long as $\gamma>2=d$. A natural question to ask now is whether such coincidences between the reachable exponent $\gamma$ and the dimension $d$ hold in general. Indeed, following the proofs of Lemma~\ref{lem:RegDens}, Theorem~\ref{thm:Bog}, Theorem~\ref{thm:Restr}, and Lemma~\ref{lemPhi}, one easily finds that all the assertions stated there are still valid for any $d\geq 3$, provided the crucial exponent $\e^{(3-q)\alpha-3}$ is changed to $\e^{(d-q)\alpha-d}$. In terms of capacity, this scaling is optimal since any hole would contribute a capacity of $\e^{(d-q)\alpha}$ to the system, and there are $\e^{-d}$ of them (see also \cite[Remark~2.4]{Lu2021}). Applying the same pressure decomposition as in Section~\ref{sec32} and setting $2^\ast = 2d/(d-2)$, we find for the (most restrictive) convective term
\begin{align*}
&\int_0^T \int_{D_\e} \rho_\e \vu_\e \otimes \vu_\eps : \nabla R_\e \phi \dd x \dd t\\
&\lesssim \|\rho_\e\|_{L^\infty(0,T;L^\gamma(D_\e))} \|\vu_\e\|_{L^2(0,T;L^{2^\ast}(D_\e))}^2 \|\nabla R_\e \phi\|_{L^\infty(0,T;L^\frac{2^\ast \gamma}{2^\ast(\gamma-1)-2\gamma}(D_\e))}\\
&\lesssim \bigg( 1 + \e^{(d-\frac{2^\ast \gamma}{2^\ast(\gamma-1)-2\gamma})\alpha - d} \bigg)^\frac{2^\ast(\gamma-1)-2\gamma}{2^\ast \gamma},
\end{align*}
which is uniformly bounded as long as
\begin{align*}
\alpha > \frac{d}{d-\frac{2^\ast \gamma}{2^\ast(\gamma-1)-2\gamma}} = \frac{2\gamma - d}{\gamma-d}, \quad \gamma > d.
\end{align*}
Similarly, the diffusive part leads to
\begin{align*}
\int_0^T \int_{D_\e} \ms(\nabla \vu_\e) : \nabla R_\e \phi \dd x \dd t \lesssim \|\nabla R_\e \phi\|_{L^2(0,T;L^2(D_\e))} \lesssim \big( 1+ \e^{(d-2)\alpha - d} \big)^\frac12,
\end{align*}
which is uniform as long as $\alpha>\frac{d}{d-2}$. Finally, we arrive at the conditions
\begin{align*}
\alpha>\max \left\{ \frac{d}{d-2}, \frac{2\gamma - d}{\gamma-d} \right\},\quad \gamma>d \geq 3.
\end{align*}
Note that the last term in the maximum wins precisely if $d=3$ and $\gamma \leq 6$, or $d\geq 4$. Further, it is remarkable that the lower bound for $\gamma$ connects to the dimension of the underlying space as {\cre $\gamma>\mathfrak{D}-1$, where $\mathfrak{D}=d+1$} corresponds to the space-time dimension of $(0,T)\times \R^d \sim \R^{d+1}$. In view of this, it seems that one cannot go below this bound. Moreover, similar considerations for the stationary case, where the known values are $\gamma>1$ if $d=2$ (see \cite{NecasovaPan2022}), and $\gamma>2$ if $d=3$ (see \cite{DieningFeireislLu2017}), lead to
\begin{align*}
\alpha > \max \left\{\frac{d}{d-2}, \frac{2\gamma-d}{\gamma-(d-1)} \right\}, \quad \gamma>d-1 \geq 2.
\end{align*}
{\cre In both cases, the optimal lower bound for $\gamma$ is then indeed $\gamma>\mathfrak{D}-1$, where $\mathfrak{D}$ is the space-time dimension given by $\mathfrak{D}=d$ in the steady case, and $\mathfrak{D}=d+1$ in the unsteady case.}

\section*{Acknowledgement}
{\it F. O. and M. P. have been supported by the Czech Science Foundation (GA\v CR) project 22-01591S. The Institute of Mathematics, CAS is supported by RVO:67985840.}


\bibliographystyle{amsalpha}

\end{document}